\tikzset{commutative diagrams/.cd}
\renewcommand\labelenumi{(\roman{enumi})}
\renewcommand\theenumi\labelenumi
\newtheorem{theorem}{Theorem}
\newtheorem{lemma}[theorem]{Lemma}
\newtheorem{prop}[theorem]{Proposition}
\newtheorem{cor}[theorem]{Corollary}
\theoremstyle{definition}
\newtheorem{definition}[theorem]{Definition}
\theoremstyle{remark}
\newtheorem{remark}[theorem]{Remark}
\numberwithin{theorem}{section}
\numberwithin{equation}{section}
\newcommand{\abs}[1]{\left\lvert#1\right\rvert}
\newcommand{\R}{{\mathbb{R}}}
\newcommand{\N}{{\mathbb{N}}}
\newcommand{\cA}{\mathcal{A}}
\newcommand{\cEvent}{\mathcal{E}}
\newcommand{\E}[1]{\mathbb{E}\left[ #1 \right]}
\newcommand{\Eover}[2]{\mathbb{E}_{#1}\left[#2\right]}
\newcommand{\Prob}[1]{\mathbb{P}\left( #1 \right)}
\newcommand{\ind}[1]{\mathds{1}_{#1}}
\newcommand{\given}{\ \middle|\ }
\DeclareMathOperator{\Vol}{vol}
\newcommand{\vertiii}[1]{{\left\vert\kern-0.25ex\left\vert\kern-0.25ex\left\vert #1 
		\right\vert\kern-0.25ex\right\vert\kern-0.25ex\right\vert}}
\newcommand{\eps}{\varepsilon}
\newcommand{\q}{q^*}
\newcommand{\vol}{{\rm vol}}
\newcommand{\dv}{\mathbf{d}}
\newcommand{\fD}{\Delta^9}
\begin{document}
	
	\lstset{language=matlab}
	
	\definecolor{light-red}{rgb}{0.8,0,0}
	\definecolor{light-blue}{rgb}{0,0,0.8}
	\definecolor{pale-pink}{rgb}{1,0.95,0.95}
	\definecolor{cream-white}{rgb}{1,0.99,0.99}
	\definecolor{pure-white}{rgb}{1,1,1}
	\definecolor{pale-green}{rgb}{0.6,0.89,0.68}
	
	\title{Universal lower bound for community structure of sparse graphs}
	
	\author{Vilhelm Agdur, Nina Kam\v{c}ev and Fiona Skerman}
	\date{\today}

	\maketitle
         \begin{abstract}
            We prove new lower bounds on the modularity of graphs. Specifically, the modularity of a graph $G$ with average degree $\bar d$ is  $\Omega(\bar{d}^{-1/2})$, under some mild assumptions on the degree sequence of $G$. The lower bound $\Omega(\bar{d}^{-1/2})$ applies, for instance, to graphs with a power-law degree sequence or a near-regular degree sequence.

            It has been suggested that the relatively high modularity of the Erd\H{o}s-R\'enyi random graph~$G_{n,p}$ stems from the random fluctuations in its edge distribution, however our results imply high modularity for any graph with a degree sequence matching that typically found in~$G_{n,p}$. 
            
            The proof of the new lower bound relies on certain weight-balanced bisections with few cross-edges, which build on ideas of Alon [Combinatorics, Probability and Computing (1997)] and may be of independent interest.
        \end{abstract}

\section{Introduction}

In numerous real-world examples of graphs, we anticipate a certain community structure -- for instance, people form friend groups, neurons cluster into functional units, academic papers divide into subfields. To infer this structure from graph data, several metrics have been proposed to evaluate the quality of vertex partitions. One of the most widely used metrics is \emph{modularity},  introduced by Newman and Girvan~\cite{NewmanGirvan}.

Each vertex partition $\cA$ of a graph is given a  \textit{modularity score} $q_{\cA}(G)$, with higher scores taken to indicate that a partition better captures the community structure of a graph. In practice, for large networks, community detection is performed through algorithms that iteratively try to optimise this score~\cite{lf11}, such as the Louvain~\cite{louvain} or Leiden~\cite{traag2019leiden} algorithms.

The modularity score of a partition $\cA$ of a graph $G$ with $m$ edges is given by
\begin{equation}\label{eq:defmod}
q_\cA(G) = \sum_{A\in \cA} \frac{e(A)}{m} - \left(\frac{\Vol(A)}{2m}\right)^2\end{equation}
where the sum runs over parts $A \subseteq V(G)$ of the partition, $e(A)$ is the number of edges within part~$A$, and the volume $\Vol(A)$ is the sum of the degrees of the vertices in part $A$.

As can be seen, the formula for modularity consists of two terms balancing against one another -- one which partitions with edges within the parts, and a sum of squares term that rewards having many small parts, or for a fixed number of parts rewards parts of approximately equal volume. We call these terms the \emph{coverage} or \emph{edge contribution} and the \emph{degree tax} respectively.

The score of any partition is between $-0.5$ and $1$~\cite{nphard}. The modularity of a graph, $\q(G)$, is the maximum of $q_\cA(G)$ over all partitions $\cA$ of $G$. It is easy to see that the partition that puts all vertices in the same part gets a score of exactly zero, so that $\q(G)$ is always between zero and one.

While this gives us a reasonable way of comparing two different partitions of the same graph and telling which is better, it does not immediately give us a way of taking a graph and telling if it has a significant community structure or not. If you are given a graph, and compute that its modularity score is $0.23$, does this mean the graph has or does not have a community structure?

One might initially hope that random graphs with no underlying structure would have modularity essentially zero, but this turns out not to be true, at least if the graph is sparse, which many real-world graphs are. The binomial random graph, $G_{n,p}$, is likely to have high modularity so long as the average degree is bounded~\cite{GPA04,ERmod,reichardt2006statistical}. As proved by Ostroumova, Pra\l at and Raigorodskii~\cite[Theorem 6]{opr17}, any graph with maximum degree $o(n)$ and average degree $\bar d$ has modularity at least about~$2\bar{d}^{-1}$. For random graphs with a given bounded degree sequence (unders some natural assumptions), this lower bound was improved to $(2+\eps)\bar{d}^{-1}$ by Lichev and Mitsche~\cite{lichev_mitsche_2022}.

We give another result in this direction, showing that any graph whose degree sequence is not too heavy-tailed will have modularity $\Omega\left(\bar{d}^{-\frac{1}{2}}\right)$. One motivation for this result are applications to graphs with a \textit{power-law} degree sequence, and specifically, to preferential-attachment graphs, which are discussed in Section~\ref{subsec:powerlaw} extending a result of~\cite{opr17}. The following statement is a concise version of the result as $n \to \infty$ (so $o(1)$-terms are with respect to $n$), whereas the error terms are stated explicitly as Proposition~\ref{prop:main}. 
\begin{theorem}\label{t:main}
Let $G$ be an $n$-vertex graph with average degree $\bar d \geq 1$, $L = \left\{v \in G\given d_v < C \bar{d}\right\}$ for some $C > 1$, and assume that  $\Vol(L) \geq (1 + \gamma) m = (1 + \gamma)\frac{n \bar d}{2}$ for some $\gamma >0$.  

If ${\Delta(G)} n^{-1}= o(1)$ and $\bar d ^{10}n^{-1}=o(1)$, then $$\q(G)\geq \frac{0.26 \gamma}{\sqrt{C \bar{d}}}(1+o(1)).$$
\end{theorem}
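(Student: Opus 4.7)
Since the modularity formula simplifies nicely for a weight-balanced bisection into two parts of equal volume, my plan is to prove the theorem by exhibiting a bisection $V(G) = A_1 \sqcup A_2$ with $\Vol(A_1) = \Vol(A_2) = m$, such that the number of cross-edges satisfies $e(A_1, A_2) \leq \bigl(\frac{1}{2} - \frac{0.26\gamma}{\sqrt{C\bar{d}}}\bigr) m \cdot (1 + o(1))$. For such a partition, the degree tax equals exactly $\frac{1}{2}$, so the modularity score collapses to
\begin{equation*}
q_{\{A_1, A_2\}}(G) = \frac{m - e(A_1, A_2)}{m} - \frac{1}{2} = \frac{1}{2} - \frac{e(A_1, A_2)}{m} \geq \frac{0.26\gamma}{\sqrt{C\bar{d}}}(1+o(1)),
\end{equation*}
which is the desired conclusion.

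Constructing the bisection is the heart of the proof and will build on the probabilistic method of Alon from the cited 1997 paper. First, I restrict the bulk of the construction to the low-degree set $L = \{v : d_v < C\bar{d}\}$, where all degrees are bounded by $C\bar{d}$. The hypothesis $\Vol(L) \geq (1+\gamma)m$ provides a volume surplus of $\gamma m$, which supplies both (a) enough volume to form both sides of a balanced bisection using mostly $L$-vertices, and (b) the slack needed to absorb the contribution of $H = V\setminus L$ without destroying balance. Second, within $L$, I introduce correlations among vertex assignments\textemdash either by grouping vertices into small blocks that are jointly assigned to the same side, or by a biased local optimisation step\textemdash so that adjacent low-degree vertices end up on the same side more often than in a uniform random bisection. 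These correlations convert a portion of the $\gamma m$ volume surplus into monochromatic edges, saving roughly $\gamma m/\sqrt{C\bar{d}}$ cross-edges compared to the baseline $m/2$. Finally, high-degree vertices in $H$ are allocated to the two sides using a greedy rule that restores exact volume balance.

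The main obstacle is the second step. A uniform random bisection only yields fluctuations of order $\sqrt{m}$ in the cut, corresponding to a modularity gain of order $1/\sqrt{m} = 1/\sqrt{n\bar{d}}$, which is far too weak once $n \gg \bar{d}$. On the other hand, a purely combinatorial scheme such as using a matching in $L$ to pair vertices only yields a gain of order $1/\bar{d}$, again weaker than what the theorem claims. Achieving the correct order $1/\sqrt{\bar{d}}$ requires an intricate correlated probabilistic construction\textemdash this is precisely what Alon's argument provides and what the paper adapts here. I also expect delicate analysis to control the error terms through the assumptions $\Delta/n = o(1)$ and $\bar{d}^{10}/n = o(1)$, which govern the concentration of the cut and the size of the rebalancing corrections; the exact numerical constant $0.26$ is likely the outcome of optimising a free parameter (such as the block size, or the bias of the random assignment) in the construction.
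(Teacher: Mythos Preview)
Your high-level strategy---apply an Alon-type bisection to $G[L]$ and then distribute $H$ between the two sides---is exactly what the paper does. But several of the concrete steps you describe are either wrong or missing the key idea.

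First, you aim for an \emph{exactly} volume-balanced bisection and then plan to restore balance by greedily assigning $H$-vertices. The paper does not do this, and for good reason: a greedy assignment of the heavy vertices gives you no control on the number of cross-edges they contribute, so the $e(A_1,A_2)$ bound could be destroyed. Instead, the paper accepts an \emph{approximately} balanced bipartition, bounds the volume discrepancy by $(C\bar d)^{10}$ (plus a random contribution from $H$), and shows the resulting penalty in the degree tax is $o(\bar d^{-1/2})$. For the $H$-vertices it uses a \emph{random} assignment (matched pairs for degrees $\geq n^{1/2}$, independent coins otherwise), which keeps the expected cross-edge count at exactly half and keeps the volume variance small enough; this is where the hypotheses $\Delta/n=o(1)$ and $\bar d^{10}/n=o(1)$ enter.

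Second, your description of how the $1/\sqrt{\bar d}$ gain arises (``grouping vertices into small blocks that are jointly assigned'' or ``biased local optimisation'') is not Alon's mechanism. Alon's method fixes a perfect matching $M$ on $V$ that, together with $G$, has no short alternating cycles, and then orients $M$ randomly with a resampling step; the absence of short cycles is what makes the second-moment analysis go through and yields the constant $c=3/(8\sqrt 2)\approx 0.265$. The constant $0.26$ is therefore not the result of optimising a free parameter in your construction---it is Alon's constant, imported as a black box. What the paper \emph{adds} is a new matching lemma (Lemma~\ref{matchinglemma}) that simultaneously (i) avoids short loops so Alon's bound applies, and (ii) only matches vertices that are close in the degree ordering, so that Lemma~\ref{lem.obscure} forces the two sides to have nearly equal volume. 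This coupling of ``few cross-edges'' with ``near-equal volume'' is the technical heart of the argument, and your sketch does not identify it.
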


One way to interpret the result is that, assuming $\bar{d}=o(n^{1/10})$, the only obstruction to modularity~$\Omega(\bar{d}^{-1/2})$ is if we have a minority of vertices which contain at least half the volume of the graph. This happens for unbalanced bipartite graphs, as discussed below.

The $\Omega\left( \bar{d}^{-1/2} \right)$ is the best lower bound we could hope for without imposing more conditions, because there exist families of graphs which achieve this bound. For example, $d$-regular graphs~$G_{n,d}$, for large enough $d$, have modularity $\q(G_{n,d})=\Theta\left(d^{-1/2}\right)$~\cite{treelike}. Another example are Erd\H{o}s-Reny\' i random graphs, see the following section, as well as the Chung-Lu model, see Section~\ref{sec:upper-random}.

\paragraph{Modularity from fluctuations in random graphs? Or automatically by average degree?}
Guimer\`a, Sales-Pardo and Amaral published a highly influential paper showing the binomial random graph $G_{n,p}$ can have high modularity~\cite{GPA04}. They estimated it to have modularity $\Theta\left((np)^{-1/3}\right)$, meaning the modularity does not go to zero for constant average degree, the usual regime of interest for real networks. Using deep but non-rigorous insights from statistical physics Reichardt and Bornholdt~\cite{reichardt2006statistical} conjectured the modularity of $G_{n,p}$ to be $\Theta\left((np)^{-1/2}\right)$ whp, and this was confirmed to hold whp for $1/n \leq p \leq 0.99$ 
in \cite{ERmod}. 
	
Notice that this matches the bound in Theorem~\ref{t:main}. To be precise since the average degree of~$G_{n,p}$ is tightly concentrated about $(n-1)p=np(1+o(1))$, \cite{ERmod} implies that for $1/n \leq p \leq 0.99$ and for $G\sim G(n,p)$ whp we have $\q(G)=\Theta(\bar{d}(G)^{-1/2} )$.

Thus, our result shows that these lower bounds on the modularity of $G_{n,p}$ hold simply because of its average degree and the well-behaved nature of its degree sequence, without needing any appeal to fluctuations or any other particular feature of the model -- the same bound holds for any graph with a similar degree sequence.

Furthermore, our lower bounds offer a certain level of validation to the concept of modularity as a measure for community structure. It would be less than satisfactory if there existed graphs with considerably lower modularity than  a random graph with a \textit{similar} degree sequence. Our results, on the other hand, imply that random graphs do, in a sense, have the minimum achievable modularity.

\paragraph{Modularity results}
The modularity of graphs from random graph models and the relationship between graph properties and modularity has received much recent attention. We have mentioned already the deterministic lower bound of $2\bar{d}^{-1}$ for graphs with sublinear maximum degree~\cite{opr17} and results for random regular~\cite{treelike} and Erd\H{o}s-R\'enyi graphs~\cite{ERmod}. For random cubic graphs Lichev and Mitsche~\cite{lichev_mitsche_2022} proved the modularity whp lies in the interval $[0.667, 0.79]$ and more generally that random graphs with a given  degree sequence have modularity at least $(2+\eps)\bar{d}^{-1}$. Preferential attachment graphs with $h\geq 2$ edges added at each step whp have modularity at most~$15/16$ and at least $\Omega(\bar{d}^{-1/2})$~\cite{opr17}, see also the next section. Sampling a random subgraph of a given graph by including each edge independently with probability $p$ whp has modularity approximating that of the underlying graph for $p$ such that the expected degree in the sampled graph is at least a large constant~\cite{mcdiarmid2021modularity}.

There are also graphs known to be `maximally modular'~\cite{modgraphclasses}, i.e.\ with modularity tending to 1 as the number of edges tends to infinity. It has been shown that graphs with sublinear maximum degree from a minor-closed class~\cite{lasonsulkowska2023modularity} and (whp) hyperbolic graphs~\cite{hyperbolic} and spatial preferential attachment~\cite{opr17} are maximally modular. In another direction, see~\cite{gosgens2023hyperspherical} for a geometric interpretation of modularity.

\paragraph{Tightness of Theorem~\ref{t:main} - complete bipartite and random graphs.}
The result is tight in two senses. Firstly, the $\gamma>0$ condition is necessary and secondly the $\Omega(\bar{d}^{-1/2})$ is the best lower bound we could hope for without imposing more conditions.

To see that $\gamma>0$ is necessary, we note that for any even $d$ we may construct a graph $G$ with average degree about $d$ with $\q(G)=0$ and such that $\Vol(L)=\Vol(G)/2$. It is known that complete bipartite graphs have modularity zero~\cite{bolla2015spectral,majstorovic2014note}.
Take the complete bipartite graph $G=K_{d/2, t }$, and note the average degree is $\bar{d}=dt/(d+t)$ and thus for sufficiently large $t$ we have $\bar{d}\approx d$. The graph has many vertices with degree $d/2$, few vertices of degree $t$ and both sets have volume $\Vol(G)/2$. Thus the set $L$ in the theorem statement will consist of vertices of degree $d/2$ and have volume $\Vol(G)/2$, yielding $\gamma=0$. 

To see that $\Omega(\bar{d}^{-1/2})$ is the best lower bound possible without imposing more conditions, we recall that a random $d$-regular graph has modularity at most $2d^{-1/2}$~\cite{treelike}. Moreover, Corollary~\ref{cor.ubCOLanka} gives examples of graphs with a large family of possible degree sequences and modularity~${\Omega(\bar d^{-1/2})}$.

\subsection{Application to power-law graphs} 
\label{subsec:powerlaw}

In this section, we discuss two applications of Theorem~\ref{t:main}. Informally, graphs with a \textit{power-law degree} sequence and preferential-attachment graphs have modularity $\Omega(\bar d^{-1/2})$, generalising a result of~\cite{opr17}.

Many real-world graphs follow a power-law degree distribution, for instance the World Wide Web, genetic networks and collaboration networks \cite{ab99,barabasi-book,cl04}. This means that the proportion of vertices of degree $k$ is $O(k^{-\tau})$ for a parameter $\tau$, called the \textit{shape coefficient}.  
Most examples found in literature have the \textit{shape coefficient} $\tau$ in the interval $(2, 3]$ -- for example roughly~$2.2$ for the Internet or~$2.3$ for the movie actors network~\cite[Chapter 8]{vanderhofstad17}. For $\tau >2$, that is, as soon as the first moment of the sequence is well-defined, most of the volume is on the vertices whose degree is near average. This allows us to apply Theorem~\ref{t:main} and obtain the following lower bound.


\begin{theorem}\label{t:powerlaw}
    Let $G$ be a graph with degree sequence $\dv = (d_i)_{i \in [n]}$, with average degree $\bar d$, satisfying 
    \begin{equation}\label{eq:powerlaw-hyp}
        \frac 1n |\{i: d_i \geq k \}| \leq A \bar d^{\tau -1} k^{1-\tau}
    \end{equation}
    for all $i$, with constants $\tau >2$ and $A >0$. For $b = 0.1 \left( \frac{(\tau-2)}{8A} \right)^{\frac {1}{2(\tau -2)}}$ and sufficiently large $n$,
    $$\q(G) \geq b \bar d^{-1/2}.$$
\end{theorem}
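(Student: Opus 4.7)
The aim is to verify the hypotheses of Theorem~\ref{t:main} for a suitable choice of $C > 1$ and $\gamma > 0$ and then read off the modularity lower bound. The quantitative heart of the argument is to convert the hypothesis on the \emph{count} of high-degree vertices into a bound on their \emph{volume}. Writing $N(k) := |\{i : d_i \geq k\}|$ and summing by parts,
\begin{equation*}
\Vol\bigl(\{v : d_v \geq k\}\bigr) \;=\; k\,N(k) + \sum_{t > k} N(t) \;\leq\; A\,n\,\bar d^{\,\tau-1}\!\left(k^{2-\tau} + \int_k^\infty t^{1-\tau}\mathrm{d}t\right) \;=\; \frac{A(\tau-1)}{\tau-2}\,n\,\bar d^{\,\tau-1}\,k^{2-\tau}.
\end{equation*}
Dividing by $2m = n\bar d$ and setting $k = C\bar d$ gives the scale-invariant estimate $\Vol(\{v : d_v \geq C\bar d\})/2m \leq \frac{A(\tau-1)}{\tau-2}\,C^{2-\tau}$, in which all $\bar d$-dependence cancels.

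\textbf{Choosing constants.} To force this quantity below $(1-\gamma)/2$, it suffices to take $C \geq \left(\tfrac{2A(\tau-1)}{(1-\gamma)(\tau-2)}\right)^{\!1/(\tau-2)}$. I would pick $\gamma = 1/2$ and $C$ equal to this minimum; Theorem~\ref{t:main} then yields $\q(G) \geq (0.13/\sqrt{C})\,\bar d^{-1/2}(1+o(1))$. Substituting for $\sqrt{C}$ and comparing with the target $b = 0.1\bigl(\tfrac{\tau-2}{8A}\bigr)^{1/(2(\tau-2))}$, the required inequality reduces to $1.3 \cdot \bigl(\tfrac{2}{\tau-1}\bigr)^{1/(2(\tau-2))} \geq 1$, equivalently $1.3^{\,2(\tau-2)} \geq (\tau-1)/2$, which holds for all $\tau > 2$ since the left side grows exponentially while the right side grows linearly. (A closer-to-optimal choice $\gamma = \frac{2(\tau-2)}{2(\tau-2)+1}$ would tighten constants but not the asymptotic shape of $b$.)

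\textbf{Remaining hypotheses and main obstacle.} Applying the power-law bound at $k = \Delta(G)$ gives $1 \leq N(\Delta)$, hence $\Delta(G) \leq (An)^{1/(\tau-1)}\bar d$, so $\Delta(G) = o(n)$ provided $\bar d$ does not grow too fast; the more restrictive condition $\bar d^{10} = o(n)$ in Theorem~\ref{t:main} is absorbed into ``sufficiently large $n$''. An edge case worth noting is that if the formula for $C$ above is below $1$, we simply take any larger $C > 1$, which only makes the inequality easier. The main obstacle is the volume tail bound itself, but once this is written down the remainder is routine constant-chasing.
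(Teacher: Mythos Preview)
Your proposal is correct and follows essentially the same route as the paper: a summation-by-parts conversion of the count hypothesis into a volume tail bound $\Vol(\{d_v\geq k\})\leq \frac{A(\tau-1)}{\tau-2}n\bar d^{\,\tau-1}k^{2-\tau}$, the choice $\gamma=\tfrac12$ and $C=\bigl(\tfrac{4A(\tau-1)}{\tau-2}\bigr)^{1/(\tau-2)}$, and the maximum-degree bound $\Delta\leq (An)^{1/(\tau-1)}\bar d$ from applying the hypothesis at $k=\Delta$. The only cosmetic difference is that the paper first reduces to $\tau\leq 3$ so that $C\leq \bigl(\tfrac{8A}{\tau-2}\bigr)^{1/(\tau-2)}$ and then uses $0.13>0.1$, whereas you verify the constant comparison $1.3^{2(\tau-2)}\geq(\tau-1)/2$ directly for all $\tau>2$; both are valid.
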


As mentioned, the best previously known lower bound for the modularity of graphs satisfying~\ref{eq:powerlaw-hyp} is $2\bar d^{-1}$~\cite{opr17} - since their max degree is sublinear. The modularity was already known to be~$\Omega(\bar{d}^{-1/2})$ for preferential attachment graphs (with $\delta=0$)~\cite{opr17} so our Theorems~\ref{t:powerlaw} and~\ref{thm:pa} generalise this. 
 
\paragraph{Modularity of random power-law graph models}
	
There are numerous random graph models which aim to model existing networks with a power-law degree distribution (often referred to as \textit{scale-free networks}). They fall into two basic categories,
\begin{enumerate}
   \item graphs whose degree sequence is specified a priori, and
    \item graphs in which the degrees emerge from stochastic local growth rules, such as preferential-attachment graphs.
\end{enumerate}

For (i), a lower bound on the modularity follows directly from Theorem~\ref{t:main}, assuming that the empirical degree sequence is close to the prescribed one. Notice that this holds by definition for random graphs with a given \textit{fixed} degree sequence, so Theorem~\ref{t:main} trivially applies to the uniform model, extending the results of~\cite{treelike} for $G_{n,d}$. It also implies a lower bound for the Chung-Lu model for graphs with a given \textit{expected} degree sequence. For the Chung-Lu model, we are also able to give a matching (up to constant factor) lower bound on the modularity, see Section~\ref{sec:upper-random}.

The models from category~(ii) are usually based on the preferential attachment model (PAM) \cite{ab99,brst01}, which is described in more detail in Section~\ref{sec:pam}. The preferential attachment model was introduced in a seminal paper by Albert and Barab\'asi~\cite{ab99}, which also demonstrated its ability to explain the emergence of \textit{scale-free} networks and laid the foundation for the study of complex networks. For more information on mathematical properties and applications of the PAM, see for instance~\cite{barabasi-book,vanderhofstad17,cl02}.
For PAM-type models (ii), it is not easy to prove rigorous results about the degree sequence, and controlling high-degree vertices seems particularly inaccessible (see, e.g.,~\cite{cl04}[Section 2.2] and Proposition~\ref{prop:pam-deg}~\ref{pa-degseq} in this paper). For this reason, Theorem~\ref{t:powerlaw} does not apply directly, but we demonstrate how Theorem~\ref{t:main} can be applied to the class of preferential attachment models presented in Section~\ref{sec:power_law}.

\subsection{Key Techniques}
    \paragraph{Alon's bisection method}
    Throughout this section, $G$ is a given graph with average degree $\bar d$ and we wish to find a bisection of $G$ with modularity $\Omega(\bar d^{-1/2})$. Central to our proof is the method of Alon~\cite{ab99} which gives a bisection of the graph with $n (\bar d/4 - \Omega(\bar d ^{1/2}))$ edges between the two parts. Notice that the second term $\Omega(\bar d^{1/2})$ is the deviation from a random bisection. A crucial idea in this method is to find a pairing of the vertices which does not \textit{interfere} with the edges of the graph in undesired ways, so that a randomised bisection of the vertices along those pairs can be analysed (see Lemma~\ref{blackboxbound_eAB}).

    In obtaining bisections with \textit{high} modularity, we face two obstacles -- the degree tax, and the fact that Alon's bisection technique only applies to graphs with maximum degree $O(n^{1/9})$.

    \paragraph{Pairings which equalise the volume} Regarding the degree-tax obstruction, by definition~\eqref{eq:defmod}, if a bisection obtained above is to have high modularity, it needs to have degree tax as small as possible, i.e.~the two parts need to have approximately the same volume to give degree tax near a half (see definition p\pageref{eq:defmod}). 
    Thus our problem is to find a bisection of $G$ with the same guarantee of few edges between parts, but also such that the volume of the two parts is similar. The technical result allowing us to partition a graph while controlling the volume is Lemma~\ref{matchinglemma} where we find a pairing of almost all vertices such that the vertices of each pair are \textit{near} in the degree-ordering of the vertices, but the pairing is still suitable for Alon's bisection technique to apply. This together with a load-balancing result Lemma~\ref{lem.obscure} yields Theorem~\ref{theorem_withoutcutoffs} -- informally, high modularity for graphs with maximum degree $o(n^{1/9})$.

    \paragraph{Processing high-degree vertices} However, the constraint that $\Delta(G) = o(n^{1/9})$ is still too strong for many desired applications -- for instance, graphs with a power-law degree sequence often have a significantly higher maximum degree (see Section~\ref{sec:power_law}). To circumvent this problem, we essentially apply the bisection method as above to the \textit{bulk} of the graph, that is, to the vertices whose degrees are not too far above the mean which we denote by $L$. Then we randomly divide $H=V(G)\setminus L$, the high-degree vertices, into our two parts. With positive probability, such a partition will have modularity $\Omega(\bar d^{-1/2})$ -- the main contribution to positive modularity will come from partitioning $L$, and for $H$, we only need to show that they behave approximately \textit{as expected}, even with respect to the previously found partition of $L$.

\section{Weight-balanced bisections}\label{sec:weight_balanced_bisections}
We now describe the bisection idea due to Alon~\cite{alon_1997}. The method starts from a convenient matching on $V(G)$, which we now define.
\begin{definition}
		Given a graph $G = ([n], E(G))$ and a matching $M$ disjoint from $E(G)$, a \textit{short loop} of $G$ and $M$ is a loop of length at most twelve containing between one and three edges from $M$ and never more than three consecutive edges of $G$.
\end{definition}
Note that in particular, this definition implies that $M$ and $G$ are edge-disjoint. Given such a matching $M$, Alon proposed and analysed a simple randomised algorithm which splits the vertices of the graph $G$ `along' $M$. We only describe the idea informally as we will not explicitly use it in this paper, it will suffice to use the result Theorem~\ref{alonblackbox} as a black-box. The first step is to orient the edges of $M$ independently and uniformly at random, which splits the vertex set into the set of sources and sinks in this orientation. An edge $uv$ of $M$ is marked \textit{active} if reorienting $uv$ would not increase the number of `cross-neighbours' of both $u$ and $v$ in the opposite part. The second step is to uniformly resample the orientations of the active edges, and to output the induced partition.

This partition is shown to have \textit{very few} cross-edges with positive probability, and the requirement for no short loops is important in the analysis. Below we state the result in a self-contained form. In~\cite{alon_1997}, the computations are carried out for $d$-regular graphs, but the argument covers arbitrary degree sequences verbatim, and this is also stated in the concluding remarks in~\cite{alon_1997}. Throughout the paper, $c = \frac{3}{8\sqrt 2} \approx 0.265$ is a fixed constant.
\begin{theorem}[\cite{alon_1997}]\label{alonblackbox}
		Given any graph $G$, and any perfect matching $M$ on $[n]$ disjoint from $E(G)$ such that there exist no short loops of $G$ and $M$, there exists a $U \subset [n]$ such that $M \subset U \times U^c$, and
		\begin{equation}
			e_G(U, U^c) \leq  \frac{1}{2} \sum_{i=1}^n d_i \left(\frac{1}{2} - \frac{c}{\sqrt{d_i}}\right).
		\end{equation}
	\end{theorem}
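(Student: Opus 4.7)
The plan is to execute the two-stage randomised construction described just before the statement and then apply the probabilistic method. In the first stage, orient each edge of $M$ independently and uniformly at random, and let $U_0$ be the set of sinks. Since $M$ is a perfect matching disjoint from $E(G)$, every $G$-edge $xy$ has its endpoints matched by two distinct $M$-edges, whose independent orientations place $x$ and $y$ on opposite sides with probability exactly $1/2$. Linearity gives $\E{e_G(U_0,U_0^c)} = m/2 = \frac{1}{4}\sum_i d_i$, so after rewriting the target right-hand side as $\frac{m}{2} - \frac{c}{2}\sum_i \sqrt{d_i}$ it suffices to exhibit a second stage that saves at least $\frac{c}{2}\sum_i \sqrt{d_i}$ cross-edges in expectation.

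In the second stage, call an $M$-edge $uv$ \emph{active} under the current orientation if reorienting it increases the number of cross $G$-neighbours of neither endpoint, and resample the orientations of all active edges independently and uniformly while freezing the inactive ones. The resulting partition $(U, U^c)$ still has $M \subset U \times U^c$. For a vertex $v$ with $M$-partner $u$, let $S_v$ denote the signed excess of $v$'s cross $G$-neighbours over its same-side ones in the first-stage partition; then activity of $uv$ is precisely the event $\{S_u \geq 0\} \cap \{S_v \geq 0\}$, and conditional on activity, resampling the coin of $uv$ saves $(S_u + S_v)/2$ cross-edges in expectation.

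The quantitative heart of the argument is a per-edge lower bound $\E{(S_u + S_v)\ind{\{uv\text{ active}\}}} = \Omega(\sqrt{d_u} + \sqrt{d_v})$ on the expected saving from each $M$-edge. I would obtain this by expressing each $S_v$ as a signed sum of $d_v$ Rademacher variables, one per $G$-neighbour of $v$ encoding the orientation of the unique $M$-edge containing that neighbour. A Khintchine-type estimate gives $\E{|S_v|} = \Theta(\sqrt{d_v})$, and a constant fraction of this expectation is supported on the activity event $\{S_u \geq 0,\, S_v \geq 0\}$. A careful accounting of constants recovers Alon's value $c = 3/(8\sqrt{2})$, and summing over $M$-edges (each vertex lies in exactly one) yields total expected saving at least $\frac{c}{2}\sum_i \sqrt{d_i}$.

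The main obstacle, and the entire reason for the no-short-loops hypothesis, is to justify summing these per-edge contributions without cancellation or double-counting. The variables $S_u, S_v$ for different active $M$-edges can share Rademacher coordinates whenever their $G$-neighbourhoods are coupled through a common $M$-edge, and a single $G$-edge could in principle be simultaneously counted in the savings of two distinct active $M$-edges. The hypothesis forbidding loops of length at most twelve containing between one and three $M$-edges (and never more than three consecutive $G$-edges) is precisely what is needed to confine such coupling to configurations of negligible contribution: every destructive interference pattern one must rule out corresponds to a forbidden short loop. After enumerating these local patterns and verifying that each is excluded, the probabilistic method produces a deterministic $U$ achieving the stated bound.
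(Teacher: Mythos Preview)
The paper does not prove this theorem; it is quoted as a black box from Alon~\cite{alon_1997}, with only an informal description of the two-stage randomised construction preceding the statement. So there is no ``paper's own proof'' to compare against, and your task was really to reconstruct Alon's argument.

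Your outline correctly identifies the two-stage construction and the target saving of $\tfrac{c}{2}\sum_i\sqrt{d_i}$, but the accounting in the second stage has a genuine gap. You compute the saving from resampling a single active $M$-edge $uv$ as $(S_u+S_v)/2$ in expectation, \emph{assuming all other $M$-edges are frozen}. But in stage~2 all active edges are resampled simultaneously, so the $G$-neighbours of $u$ and $v$ may themselves switch sides, and the quantity $S_u+S_v$ (defined relative to the stage-1 partition) no longer equals the actual change in cross-edges. Summing your per-$M$-edge savings therefore both miscounts and double-counts: a single $G$-edge $xy$ can contribute to the ``saving'' of the $M$-edge through $x$ and of the $M$-edge through $y$, and these contributions are not independent. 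Your final paragraph acknowledges that interactions exist but treats them as a correction to be bounded away; in fact they are first-order, and the per-$M$-edge decomposition is the wrong organising principle.

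Alon's actual analysis is organised per $G$-edge: for each $xy\in E(G)$ one computes $\Pr[xy\text{ crosses the final cut}]$ directly, as a function of the local configuration of $M$- and $G$-edges within a bounded radius of $x$ and $y$. The no-short-loops hypothesis is used to guarantee that the random bits determining this event (the stage-1 coins of nearby $M$-edges and their activity statuses) are supported on a tree-like local structure, so that the probability can be evaluated exactly and shown to be at most $\tfrac12 - \tfrac{c}{2}\bigl(d_x^{-1/2}+d_y^{-1/2}\bigr)$. Summing over $G$-edges then gives the bound with the precise constant $c=3/(8\sqrt{2})$. Your sketch would need to be reorganised along these lines to close the gap.
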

It is convenient to identify the vertex set of our graphs with $[n]$, since this gives a natural ordering on the vertices of $G$. Later, we will choose a specific vertex ordering to which the following lemma will be applied.
	\begin{lemma}\label{matchinglemma}
		Given any graph $G = ([n], E(G))$ with maximum degree $\Delta>1$, there exists a partial matching $M$ on $[n]$ disjoint from $E(G)$ such that the following holds:
		\begin{enumerate}
			\item[1.] \label{it:bandwidth} For any $vw \in M$, $\abs{v-w} \leq \fD$.
			\item[2.] \label{it:cyclesinG} There are no short loops of $G$ and $M$.
			\item[3.] \label{it:large} $[n-\fD] \subset V(M)$
		\end{enumerate}
	\end{lemma}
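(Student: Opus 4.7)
The plan is to construct $M$ greedily along the vertex ordering. I would process vertices $v = 1, 2, \ldots, n - \fD$ in order; whenever $v$ is not yet in $V(M)$, I try to add an edge $\{v, w\}$ to $M$ with $w$ chosen from the forward window $W_v = \{v+1, \ldots, v + \fD\}$, subject to $w \notin V(M)$, $vw \notin E(G)$, and the condition that $M \cup \{vw\}$ creates no short loop of $G$. Conditions~(1) and~(3) of the lemma are immediate from this construction (the terminal condition $n - \fD$ ensures the forward window always has room), and~(2) follows from the explicit exclusion performed at each step.

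The substance of the argument is showing that a valid partner $w$ always exists at every step. I would bound the forbidden candidates in $W_v$ in three groups: (a) vertices already in $V(M)$ — at most $\fD$, since any previously added $M$-edge has both endpoints within index-distance $\fD$, so at most one endpoint can lie in $W_v$ for each edge; (b) neighbours of $v$ in $G$ — at most $\Delta$; and (c) vertices $w$ that would close a short loop with some existing $M$-edges. For (c), any short loop containing $vw$ has total length at most $12$ and contains $k \in \{1,2,3\}$ $M$-edges separated by $G$-paths of length at most $3$. Tracing such a loop as an alternating walk from $v$ (a $G$-path of length $\le 3$, then an $M$-edge, then a $G$-path of length $\le 3$, and so on, terminating with the new $M$-edge to $w$) gives the bound
$$\#\{\text{bad } w \text{ in case (c)}\} \;\leq\; \sum_{k=1}^{3} (\Delta + \Delta^2 + \Delta^3)^k \;\lesssim\; \Delta^9$$
on the forbidden $w$ arising from short loops.

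The main obstacle is that the $k = 3$ contribution is proportional to $\fD = \Delta^9$, so the naive count is borderline. To make it strictly below $\fD$, I would refine the $k = 3$ case by exploiting the bandwidth of the already-built matching: only existing $M$-edges whose endpoints lie within $G$-distance $\le 3$ of $v$ (and of each other along the loop) are relevant, and their number is controlled by the local edge density rather than by a power of $\Delta$, giving a bound of $o(\Delta^9)$ on the $k=3$ term. Together with (a) and (b), this shows the forbidden set has size strictly less than $\fD$, so a valid $w$ always exists. A backup strategy, should the greedy counting remain too tight, is a probabilistic construction: draw a uniformly random bandwidth-$\fD$ matching (e.g., by random pairing inside consecutive blocks), bound the expected number of short loops and $E(G)$-conflicts by linearity of expectation, and delete the offending pairs, verifying the residual matching still covers $[n - \fD]$.
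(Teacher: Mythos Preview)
Your greedy framework is exactly the paper's approach, but the counting has a genuine gap. Even with careful bookkeeping, your bounds (a) and (c) are each of order $\Delta^9$ (note that (b) is subsumed in (c), since $vw \in E(G)$ already gives a length-$2$ short loop), so their sum is roughly $2\Delta^9$, exceeding the window size $|W_v| = \Delta^9$. The refinement you sketch for the $k=3$ term, appealing to ``bandwidth'' and ``local edge density'', does not help: the $M$-edges relevant to a short loop through $v$ are exactly those captured by the alternating-path count, and the index-bandwidth constraint says nothing about how many $M$-edges lie within $G$-distance $3$ of $v$.

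The idea you are missing is a charging argument linking (a) to the \emph{backward} analogue of (c). Set $H = G^3$, so $\Delta(H) \le \Delta^3 - 1$, and replace ``no short loops'' by the stronger condition that $H \cup M$ has no alternating cycle of length $2$, $4$ or $6$. Let $F_v$ be the set of endpoints of alternating paths from $v$ of the form $H$, $HMH$ or $HMHMH$, split as $F_v = F_v^+ \cup F_v^-$ into vertices ahead of and behind $v$. The key observation: if $u > v$ is already matched, say to $w$, then $w < v$ (the edge was added when $w$ was processed), and because the algorithm chose the \emph{least} valid partner at step $w$ yet skipped over $v$ (which lies between $w$ and $u$ and is still unmatched), we must have had $v \in F_w^+$, hence $w \in F_v^-$. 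This injects the already-matched forward vertices into $F_v^-$, so the total forbidden set ahead of $v$ has size at most $|F_v^+| + |F_v^-| = |F_v| \le \Delta(H) + \Delta(H)^2 + \Delta(H)^3 \le \Delta^9 - 1 < |W_v|$. Note that this argument requires always choosing the least available $w$, which your proposal does not stipulate.

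Your probabilistic backup is plausible in spirit, but as written it does not secure property~3: deleting the pairs involved in conflicts can unmatch vertices anywhere in $[n]$, not only in the final $\Delta^9$ indices.
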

	Note that the last statement in particular means the lemma is void when $n < \fD$.
	\begin{proof}
		Let $H = G^{3}$, the graph where there is an edge $(u,v)$ if there is a path of length at most~$3$ from $u$ to $v$ in~$G$. It is straightforward to verify that our condition~2 on the matching $M$ is implied by
		\begin{enumerate}
			\item[2'.] \label{it:cycles} There does not exist any cycle of length two, four or six consisting of alternating edges from $H$ and $M$. (In particular, $H$ and $M$ are edge-disjoint.)
		\end{enumerate}
        Recalling that the maximum degree of $G$ is $\Delta>1$, we have that the maximum degree of $H$ is 
        $$\Delta(H) \leq \Delta + \Delta(\Delta-1) + \Delta(\Delta-1)^2 \leq \Delta^3 - 1.$$
		
		Intuitively, the idea of the proof is to construct the matching greedily, taking the smallest currently unmatched vertex $v$ and joining it to the first available vertex. It will be enough to show that until the very last rounds, the number of unavailable vertices will be not too large. Vertices are made unavailable when they are already incident to edges in the matching or when there is a particular \textit{dangerous} configuration of alternating edges (to ensure we do not violate property~2'). Loosely speaking, we maintain an upper bound on the number of unavailable vertices using property~1, which guarantees that the matched vertices are not too far from one another, and is established at the start of each step.

		We construct this matching $M$ using the following greedy algorithm, see Figure~\ref{fig:phases}. We identify the graphs $H$ and $M$ with their edge sets. For a matching $M$, we write $V(M)$ for the set of vertices incident to an edge of $M$.
		\begin{algorithm}
			\caption{Construction of $M$}\label{alg:constr_M}
			\begin{algorithmic}
				\State $M_1 \gets \emptyset$
				\For{$v = 1,2,\ldots,n - \fD -1$}
				\If{$v \not\in V(M_v)$}
				\State Let $F^+_v$ be the set of all $u \in \{v+1,\ldots,n\}$ such that there is a path between $u$ and $v$ consisting of alternating edges from $H$ and $M_v$  of the form $H$, $H M_v H$ or $H M_v H M_v H$. 
				\State Pick the least $w \in [n]$ such that $w \not\in F^+_v \cup V(M_v)$.
				\State $M_{v+1} \gets M_v \cup vw$
				\Else
				\State $M_{v+1} \gets M_v$
				\EndIf
				\EndFor
			\end{algorithmic}
		\end{algorithm}
		
		We will first show that the argument terminates, i.e.~that a suitable vertex $w$ can be found as long as $v \leq n - \fD$, and then that the resulting matching $M_{n-\fD + 1}$ has the desired properties.\\
		
		\noindent\emph{Claim: } Let $M_v^+=V(M_v) \cap \{v+1, \ldots, n \}$. If $v \notin V(M_v)$ then $|F_v^+\cup M_v^+| \leq \fD-1$.
		
		To begin the proof of the claim, similarly to $F_v^+$, define $F_v^-$ to be the set of $x \in \{1, \ldots v-1\}$ such that there is a path between $x$ and $v$ 
		of the form $H$, $HM_vH$ or $HM_vHM_vH$ and define $F_v=F_v^+ \cup F_v^-$.
		
		Let $u \in M_v^+$, and note we have $uw \in M_v$ for some $w<v$. But given the greedy algorithm to construct $M$, and $v<u$, this implies that $v$ was not a valid choice for $w$ to pick, that is, it must be the case that $v \in F_w^+ \cup V(M_w)$. Since $M_w \subseteq M_v$ and $v \notin V(M_v)$, this implies $v \in F_w^+$. Thus there is a path between $w$ and $v$ of the form $H$, $HM_wH$ or $HM_wHM_wH$ and so $w\in F_v^-$, again using $M_w \subseteq M_v$. In particular, we have shown that for each $u \in M_v^+$ there is a distinct $w \in F_v^-$ and hence \begin{equation}\label{eq:fwdsandback}
			|F_v^+\cup M_v^+|\leq|F_v|.
		\end{equation}
		
		The number of vertices $u$ incident to $v$ is at most $\Delta(H) $. Similarly, the number of paths starting from $v$ of the form $H M_v H$ is at most $\Delta(H)^2$ and of the form $H M_v H M_v H$ is at most $\Delta(H)^3$. Thus $|F_v|\leq \Delta(H)+\Delta(H)^2+\Delta^3 \leq (\Delta(H)+1)^3 -1 \leq \fD -1$ and by \eqref{eq:fwdsandback} we have shown the claim.
		
\begin{figure}
    \includegraphics[width=\textwidth]{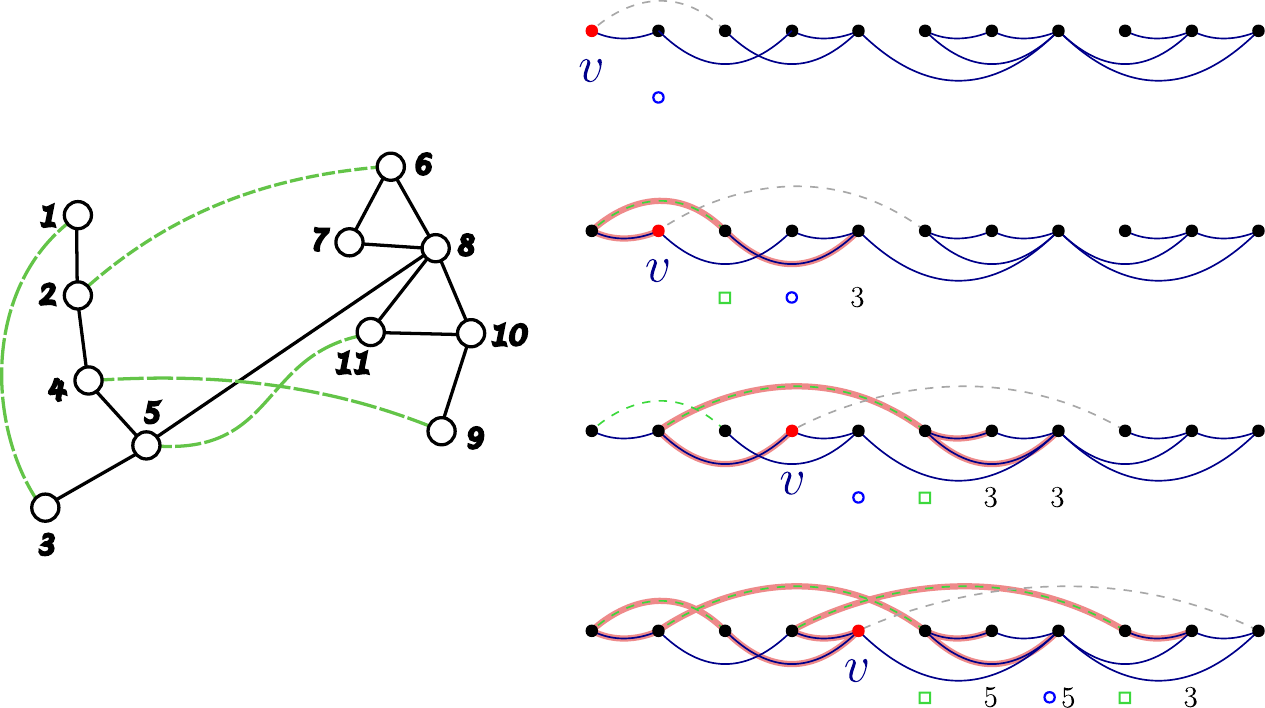}
    \centering
    \caption{\footnotesize An illustration of the process of constructing a matching with our greedy algorithm. \newline
    The graph $H$  is on the left, with the edges of the resulting matching $M$ in dashed green lines. To the right, we see each of the four time-steps in which a new edge is added, with indications of why the chosen new edge (in grey) is the one being added. In the first time-step, we cannot choose to add edge $12$ to our matching, because there already is an edge of $H$ between $1$ and $2$ -- we illustrate this with a blue circle ({$\color{blue}\circ$}). \newline
    In the second time-step, we cannot add edge $23$, because $3$ is already in the matching (indicated by a green square,~{\color{green}$\square$}), we cannot add edge $24$ because that is already an edge in $H$, and we cannot add edge $25$, because that would add a path of length three of $H$ and $M$ -- we illustrate this by highlighting the path, and a $3$ underneath vertex $5$.\newline
    The construction proceeds similarly for two more steps (vertex three is skipped because it is already in the matching when we get to it) -- in the last step, two vertices are forbidden by length-five paths -- before we finally arrive at vertex seven, for which the algorithm cannot find a match, and it terminates.}

    \label{fig:phases}
\end{figure}
		
		The claim above implies that $\{v+1, \ldots, v + \fD \} \setminus (V(M_v) \cup F_v) \neq \emptyset$ for $v \leq n- \fD$, so indeed, there is a valid choice for $w$ in each step, and the algorithm terminates. Let $M= M_{n- \fD + 1}$. For each vertex $v$, $V(M_{v+1})$ contains the initial segment $[v]$, so in particular, $M$ contains $[n- \fD]$, certifying property~3.

      Finally, let us show that $M_v$ satisfies condition~$2'$ for each $v$. This clearly holds for $M_1 =\emptyset$, and suppose that some $M_v$ satisfies condition~$2'$. If $M_{v+1} = M_v$, $M_{v+1}$ clearly satisfies condition~$2'$. Moreover, if $M_{v+1} = M_v \cup \{vw\}$, then $vw$ does not close an alternating cycle of length 2, 4 or~6 because $w$ does not lie in $F_v^+$. In either case, $M_{v+1}$ satisfies condition~$2'$, as required.
        
      In particular, $M_{n- \Delta^9+1}$ satisfies condition 2', which completes the proof.
	\end{proof}
	
	We shall use the following load-balancing result to show that the two parts of our partition have similar volume - see \cite{obscurepaper}[Lemma~2.2] or the thesis \cite{skerman2016modularity}[Lemma~2.1.3].
	\begin{lemma}\label{lem.obscure}
		Suppose that $f: [n] \to \R$ is some non-increasing function. Then, for any perfect matching $M$ on $[n]$  such that for every $(i,j) \in M$, $\abs{i-j} \leq L$, and for any orientation of the edges of $M$, it holds that
		$$\bigg| \sum_{(i,j) \in M} f(i) - f(j) \bigg| \leq L|f(n) - f(1)|.$$
	\end{lemma}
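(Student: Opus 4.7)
The plan is to rewrite the sum as a weighted sum of $\pm 1$ signs attached to each vertex and then apply summation by parts (Abel summation), reducing everything to a uniform bound on the partial sums of signs.

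More precisely, given an orientation of $M$, I would introduce signs $\eps_k \in \{+1, -1\}$ for $k \in [n]$, setting $\eps_k = +1$ if $k$ is the tail of its matched edge and $\eps_k = -1$ if it is the head. Then the sum under consideration is exactly $\sum_{k=1}^n \eps_k f(k)$, and since each edge of $M$ contributes one $+1$ and one $-1$, the full sum of signs vanishes: $\sum_{k=1}^n \eps_k = 0$. Writing $S_k = \sum_{i=1}^k \eps_i$ for the partial sums (with $S_0 = S_n = 0$), Abel summation gives
\begin{equation*}
\sum_{k=1}^n \eps_k f(k) \;=\; \sum_{k=1}^{n-1} S_k \bigl( f(k) - f(k+1) \bigr),
\end{equation*}
and each factor $f(k) - f(k+1)$ is nonnegative since $f$ is non-increasing.

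The heart of the argument is the bound $|S_k| \leq L$ for every $k$. I would prove this by observing that an edge of $M$ with both endpoints in $[k]$ contributes $0$ to $S_k$, and similarly for an edge with both endpoints in $\{k+1, \dots, n\}$; so $S_k$ counts (with sign) only those edges of $M$ which cross the cut between $[k]$ and $\{k+1, \dots, n\}$. Any such crossing edge $(i,j)$ has $i \leq k < j$ and $|i - j| \leq L$, forcing $i \in \{k - L + 1, \dots, k\}$. Since $M$ is a matching, each such value of $i$ is used at most once, giving at most $L$ crossing edges and hence $|S_k| \leq L$.

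Putting these two pieces together,
\begin{equation*}
\Bigl|\sum_{k=1}^n \eps_k f(k)\Bigr|
\;\leq\; L \sum_{k=1}^{n-1} \bigl( f(k) - f(k+1) \bigr)
\;=\; L\bigl( f(1) - f(n) \bigr)
\;=\; L\,|f(n) - f(1)|,
\end{equation*}
which is exactly the claimed inequality. There is no real obstacle here; the only thing to watch is the combinatorial step bounding the number of crossing edges, which is the whole reason the bandwidth hypothesis $|i-j| \leq L$ appears in the statement.
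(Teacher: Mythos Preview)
Your argument is correct. The paper does not actually give a proof of this lemma; it merely cites it from \cite{obscurepaper} and \cite{skerman2016modularity} as a known load-balancing result. Your Abel-summation argument is a clean self-contained proof: the identification of the sum with $\sum_k \eps_k f(k)$, the summation-by-parts step using $S_0 = S_n = 0$, and the telescoping bound are all fine, and the key combinatorial observation that at most $L$ edges of a bandwidth-$L$ matching cross any cut $[k] \mid \{k+1,\dots,n\}$ is exactly right, since the smaller endpoint of any crossing edge must lie in $\{k-L+1,\dots,k\}$ and $M$ is a matching.
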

	We can assemble these lemmata into the following proposition.
	\begin{prop}\label{partitionproposition}
		Let $G = ([n], E)$ be a graph with maximum degree $\Delta$ satisfying $\Delta^9 \in \left[1,\frac{n}{2}\right)$, and let $w_{\max} = w_1 \geq \ldots \geq w_n=w_{\min}$ be non-negative vertex weights. Let $\bar{w}$ be the average vertex weight, $\bar{w} = \frac{1}{n}\sum_{u} w_u$, and for a set $A$ of vertices, let $w(A) = \sum_{u\in A} w_u$.
  
        There exists a partition $\{A, B, R\}$ of $V(G)$ such that
		\begin{enumerate}
			\item $\abs{A} = \abs{B}$,
			\item $R \subset \{n - \Delta^9 + 1, \ldots, n\}$,
			\item $e(A,B) \leq  \frac{1}{2} \sum_{v \in A \cup B}^n d_v \left(\frac{1}{2} - \frac{c}{\sqrt{d_v}}\right)$ ,
			\item $\abs{w(A) - w(B)} \leq \Delta^9(w_{\max} - w_{\min})$ and
			\item $\max_{v \in R} w_v \leq 2\bar{w}$.
		\end{enumerate}
	\end{prop}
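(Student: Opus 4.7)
The plan is to chain together the three tools developed in this section: Lemma~\ref{matchinglemma} supplies a suitable matching $M$, Theorem~\ref{alonblackbox} bisects the matched vertices with few cross-edges, and Lemma~\ref{lem.obscure} controls the resulting weight imbalance. First, apply Lemma~\ref{matchinglemma} to $G$, using the identification of $V(G)$ with $[n]$ given by the weight ordering. This produces a partial matching $M$, disjoint from $E(G)$, with no short loops of $G$ and $M$, with $|u-v| \leq \Delta^9$ for each $(u,v) \in M$, and with $[n-\Delta^9] \subseteq V(M)$. Set $R := [n] \setminus V(M)$, so that $R \subseteq \{n-\Delta^9+1, \ldots, n\}$, establishing property~2. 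For property~5, Markov's inequality forces strictly fewer than $n/2$ indices $i$ to satisfy $w_i > 2\bar w$; combined with $\Delta^9 < n/2$ (so $n-\Delta^9+1 > n/2$), this gives $\max_{v \in R} w_v \leq w_{n-\Delta^9+1} \leq 2\bar w$.

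Second, apply Theorem~\ref{alonblackbox} to the induced subgraph $G[V(M)]$, on which $M$ is a perfect matching inheriting the no-short-loops property from $G$. This yields $U \subseteq V(M)$ with $M \subseteq U \times (V(M) \setminus U)$ together with the cross-edge bound on $G[V(M)]$. Set $A := U$ and $B := V(M) \setminus U$; since every edge of $M$ is cut and $M$ is perfect on $V(M)$, we get $|A| = |B|$, giving property~1. For property~3, observe that $e_G(A,B) = e_{G[V(M)]}(A,B)$, and that the function $g(x) = x/2 - c\sqrt{x}$ is non-decreasing on integers $x \geq 0$ (it vanishes at $0$, equals $1/2 - c > 0$ at $1$, and has positive derivative for $x > c^2$). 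Since $d_{G[V(M)]}(v) \leq d_G(v)$ for every $v \in V(M)$, this monotonicity gives $g(d_{G[V(M)]}(v)) \leq g(d_G(v))$, and summing over $V(M) = A \cup B$ transports Alon's bound to the degrees of $G$, yielding property~3.

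Finally, apply Lemma~\ref{lem.obscure} with $L = \Delta^9$ and $f(i) := w_i$, orienting each edge $(i,j) \in M$ from its endpoint in $A$ to its endpoint in $B$; the resulting signed sum equals $w(A) - w(B)$, so the lemma gives $|w(A) - w(B)| \leq \Delta^9 (w_{\max} - w_{\min})$, which is property~4. The main subtlety is the transfer of Alon's bound from the subgraph $G[V(M)]$ to the degrees of $G$, via the monotonicity of $g$ above; this step is essential since $R$ need not be empty. Applying Lemma~\ref{lem.obscure} to a partial (rather than perfect) matching is routine, since the telescoping argument underlying it goes through identically once the matched pairs are uniformly close in the ordering.
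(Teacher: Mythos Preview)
Your proof is correct and follows essentially the same route as the paper: apply Lemma~\ref{matchinglemma} to get $M$ and $R$, apply Theorem~\ref{alonblackbox} on $G[V(M)]$, transfer the cross-edge bound to $G$-degrees via monotonicity of $g(x)=x/2-c\sqrt{x}$, and finish with Lemma~\ref{lem.obscure}. The only cosmetic differences are that the paper phrases property~5 as a contradiction argument (equivalent to your Markov step) and abbreviates the monotonicity of $g$ to the remark ``$c<\tfrac12$'', whereas you spell it out; both treatments of Lemma~\ref{lem.obscure} on a non-initial-segment $V(M)$ are equally informal and equally valid, since relabeling $V(M)$ order-preservingly only shrinks the pair distances and the endpoint gap.
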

	\begin{proof}
		We may apply Lemma~\ref{matchinglemma} to our graph -- recall that we have assumed our weights $w_i$ are decreasing, which will imply that the unmatched vertices will be the ones of lowest weight. We obtain a matching $M$ consisting only of edges $ij$ with $\abs{i - j} < \Delta^9$. Let $R$ be the set of vertices not matched by $M$. Lemma \ref{matchinglemma} also tells us that $\abs{R} \leq \Delta^9$, and in fact, $R$ is contained in the final segment $\{n - \Delta^9 + 1, \ldots, n\}$.
		
		Let $G' = G \setminus R$. The graph $G'$ along with the matching $M$ fulfils the conditions of Theorem~\ref{alonblackbox}, by the construction of $M$. Hence, we obtain a set $U \subseteq [n]$ such that $M \subseteq U \times U^c$, and
		\begin{align*}
			e_{G'}(U, U^c) &\leq  \frac{1}{2} \sum_{v \in G'} d^{G'}_v \left(\frac{1}{2} - \frac{c}{\sqrt{d^{G'}_v}}\right)
			\leq \frac{1}{2} \sum_{v\in G'}^n d_v \left(\frac{1}{2} - \frac{c}{\sqrt{d_v}}\right),
		\end{align*}
		where the second inequality follows from $c < \frac{1}{2}$.
		
		Since $\abs{i - j} < \Delta^9$ for all edges $ij$ of $M$, we can appeal to Lemma~\ref{lem.obscure} and get a bound on the difference of the weights of the sets, namely $\abs{w(A) - w(B)} \leq \Delta^9(w_{\max} - w_{\min})$ as desired.
		
		Finally, let us bound the weights of the unmatched vertices. We established that the remainder $R$ will be among the $\Delta^9$ vertices of lowest weight. Suppose for contradiction that $\max_{v\in R} w_v$ is larger than $2\bar{w}$ -- then, by the ordering of the vertices, \emph{all} the vertices $1, \ldots, n-\Delta^9$ must have weight at least $2\bar{w}$. So we can compute
		\begin{align*}
			\bar{w} = \frac{1}{n}\sum_{i=1}^n w_i \geq \frac{1}{n}\sum_{i=1}^{n-\Delta^9} w_i
			\geq \frac{n - \Delta^9}{n}2\bar{w} = \left(1 - \frac{\Delta^9}{n}\right)2\bar{w},
		\end{align*}
		and the fact that $1 - \frac{\Delta^9}{n} > \frac{1}{2}$ follows from our assumption that $\Delta^9 < \frac{n}{2}$, giving us the desired contradiction.
	\end{proof}

\section{From weight-balanced partitions to modularity bounds}\label{sec:without_cutoffs}
We now pivot back to considering modularity in particular, as our objective measure on partitions. 
We start by showing a weaker version of our main theorem, that only applies under the assumption of a maximum degree bound, in order to illustrate the proof in a simpler setting. Then, to prove the main theorem, the main idea is essentially to apply Theorem~\ref{theorem_withoutcutoffs} to the bulk of the graph, that is, to the vertices whose degrees are not too far from the mean (and so we have a bound on the max degree in this bulk), and then randomly divide the high-degree vertices of the graph into our two parts.
	
Thus the main term of our main theorem is essentially the same as the main term of Theorem~\ref{theorem_withoutcutoffs}, because this bulk is where the main terms is gained; for the high degree vertices, we merely take a partition that yields roughly the expected number of cross-edges and does not interfere with the previous partition. 

 For technical reasons, it makes more sense to give a standalone proof of our main theorem that does not directly appeal to the following weaker theorem. We nevertheless include this theorem because its proof highlights the ideas at play without as many details to obscure them.

\subsection{Modularity bounds - an easy application of weight-balancing.}

The following theorem will follow quickly from our weight-balancing result, Proposition~\ref{partitionproposition}.

\begin{theorem}\label{theorem_withoutcutoffs}
For any graph $G$ such that $\Delta^9 \in \left[1,\frac{n}{6}\right)$, we have
    $$\q(G) \geq \frac{c}{n}\sum_{i=1}^n \frac{\sqrt{d_i}}{\Bar{d}} - \frac{\Delta^{20}}{2(n\bar d)^2} 
    .$$
 \end{theorem}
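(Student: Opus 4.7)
My plan is to apply Proposition~\ref{partitionproposition} with vertex weights equal to the degrees, and then turn the resulting near-bisection into a true bipartition of $V(G)$ by absorbing the residual set.

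First, order the vertices of $G$ in decreasing order of degree, so $d_1 \geq d_2 \geq \cdots \geq d_n$, and take $w_i = d_i$; then $\bar w = \bar d$ and $w_{\max} - w_{\min} \leq \Delta$. Proposition~\ref{partitionproposition} yields a partition $\{A, B, R\}$ with $|R| \leq \Delta^9$, $d_v \leq 2\bar d$ for every $v \in R$, $|\Vol(A) - \Vol(B)| \leq \Delta^{10}$, and the Alon-type cross-edge bound
\[
e(A, B) \;\leq\; \frac{2m - \Vol(R)}{4} - \frac{c}{2}\sum_{v \in A \cup B}\sqrt{d_v}.
\]
To extend $\{A, B\}$ to a bipartition of $V(G)$, I process the vertices of $R$ greedily, each time placing the current vertex in whichever current part has smaller volume. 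Since the initial gap is at most $\Delta^{10}$ and each added vertex has degree at most $2\bar d \leq \Delta^{10}$, the resulting bipartition $\{A', B'\} = \{A \cup R_A,\, B \cup R_B\}$ satisfies $|\Vol(A') - \Vol(B')| \leq \Delta^{10}$.

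Writing the modularity of $\{A', B'\}$ as
\[
q(\{A', B'\}) \;=\; \frac{1}{2} - \frac{e(A', B')}{m} - \frac{(\Vol(A') - \Vol(B'))^2}{8 m^2},
\]
the degree-tax term is immediately bounded by $\Delta^{20}/(8m^2) = \Delta^{20}/(2(n\bar d)^2)$, matching the stated error exactly. For the coverage, every cross-edge of $\{A', B'\}$ is either in $E(A, B)$ or incident to $R$, so $e(A', B') \leq e(A, B) + \Vol(R) - e(R)$. Substituting the Alon-type bound on $e(A, B)$ and using $\sum_{v \in A \cup B}\sqrt{d_v} \geq \sum_{v=1}^n \sqrt{d_v} - \sum_{v \in R}\sqrt{d_v}$ extracts the main term $\frac{c}{2m}\sum_{i=1}^n \sqrt{d_i} = \frac{c}{n}\sum_i \sqrt{d_i}/\bar d$.

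The main technical obstacle I anticipate is the bookkeeping of the corrections from the residual set $R$. On the tax side, the greedy load-balancing already produces exactly the stated error. On the coverage side, the extra cross-edges from $R$ contribute at most $\Vol(R) - e(R)$, partly offset by the $-\Vol(R)/4$ saving sitting inside the Alon-type bound; the shortfall $\sum_{v \in R}\sqrt{d_v}$ in the main sum must also be accounted for. These contributions are controlled via the crude estimates $|R| \leq \Delta^9$ and $d_v \leq 2\bar d$ for $v \in R$, which show them to be of lower order than the main term under the hypothesis $\Delta^9 < n/6$.
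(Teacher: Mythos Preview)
Your approach diverges from the paper's in how the residual set $R$ is handled. You absorb $R$ into $A$ and $B$ to form a bipartition and then invoke the two-part modularity formula, whereas the paper keeps $\{A,B,R\}$ as a three-part partition and applies Lemma~\ref{lem.nearlybisection} to it. The point of that lemma is that, once $\Vol(R)\le \Vol(G)/3$, the score of the three-part partition already satisfies
\[
q_{\{A,B,R\}}(G)\;\ge\;\tfrac12 - \frac{e(A,B)}{m} - \frac{(\Vol(A)-\Vol(B))^2}{2\Vol(G)^2},
\]
so no coverage penalty for edges incident to $R$ ever enters the final bound. The paper then plugs in Proposition~\ref{partitionproposition}(iii) (extended from $A\cup B$ to all of $[n]$, which only weakens the upper bound since $d_v(\tfrac12-c/\sqrt{d_v})\ge 0$) and Proposition~\ref{partitionproposition}(iv), and uses $\Vol(R)\le 2\bar d\,\Delta^{9}\le \Vol(G)/3$ to verify the hypothesis of the lemma. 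This yields exactly the two terms in the theorem, with no extra $R$-corrections.

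By contrast, your merging step costs you up to $\Vol(R)-e(R)$ additional cross-edges, producing a coverage error of order $\Vol(R)/m\le 2\Delta^{9}/n$ (and the companion shortfall $\tfrac{c}{2m}\sum_{v\in R}\sqrt{d_v}$). You dismiss these as ``of lower order than the main term under the hypothesis $\Delta^9<n/6$'', but that is not justified: for a $d$-regular graph with $n$ only a bounded multiple of $d^{9}$ the term $3\Delta^{9}/n$ is of constant order, while the main term $c/\sqrt d$ is small and the stated error $\Delta^{20}/(2(n\bar d)^2)$ can be much smaller still. So the extra error is absorbable into neither, and your argument does not establish the exact inequality of the theorem. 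To recover the stated bound you should follow the paper's route and work with the three-part partition via Lemma~\ref{lem.nearlybisection} rather than collapsing to a bipartition.
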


To prove the theorem we will use Proposition~\ref{partitionproposition}, by taking the vertex degrees as the weights, which gives us a volume balanced nearly-bisection: two large sets $A$ and $B$ with similar volumes and a small remainder set $R$. The modularity score of the partition into these three sets will be high if the number of edges between $A$ and $B$ is significantly less than half the edges of the graph, the volumes of $A$ and $B$ are sufficiently similar and $R$ has a sufficiently small volume. The following lemma makes this precise. 

\begin{lemma}\label{lem.nearlybisection} Let $G$ be a graph and $\cA=\{A,B,R\}$ a vertex partition of $G$ with $\vol(R)\leq \vol(G)/3$. Then
    \begin{eqnarray*}
        q_\cA(G) &\!\geq \!& 
                     \frac{1}{2}  - \frac{e(A,B)}{e(G)} - \frac{(\vol(A)-\vol(B))^2}{2\vol(G)^2}
    \end{eqnarray*}
\end{lemma}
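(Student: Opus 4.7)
My plan is to expand $q_\cA(G)$ directly from its definition and isolate the bipartition-like contribution of the split $\{A,B\}$ from the residual contributions of $R$. With $m = e(G)$ and noting that within-part and cross-part edges together account for all edges, the coverage rewrites as
\[
\sum_{X \in \cA} \frac{e(X)}{m} \;=\; 1 - \frac{e(A,B) + e(A,R) + e(B,R)}{m}.
\]

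For the degree tax, I will apply the identity $\vol(A)^2 + \vol(B)^2 = \tfrac{1}{2}\bigl[(\vol(A)+\vol(B))^2 + (\vol(A)-\vol(B))^2\bigr]$ together with the relation $\vol(A) + \vol(B) = \vol(G) - \vol(R)$. Since $\vol(A) + \vol(B) \leq \vol(G) = 2m$, the $(A,B)$-contribution to the degree tax is bounded by $\tfrac{1}{2} + \tfrac{(\vol(A)-\vol(B))^2}{2\vol(G)^2}$, which already matches the form of the two main terms in the claim, leaving a separate $\vol(R)^2/\vol(G)^2$ piece to handle.

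Putting coverage and degree tax together yields an expression of the form
\[
q_\cA(G) \;\geq\; \tfrac{1}{2} - \frac{e(A,B)}{e(G)} - \frac{(\vol(A)-\vol(B))^2}{2\vol(G)^2} + E,
\]
where $E$ is a residual involving $\vol(R)$, $e(R)$, and the cross-edge counts $e(A,R), e(B,R)$. Using the identity $\vol(R) = 2e(R) + e(A,R) + e(B,R)$ to eliminate the cross-edge terms, $E$ simplifies to an explicit expression in $\vol(R)/\vol(G)$ and $e(R)/\vol(G)$ alone.

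The main obstacle — and the only place the hypothesis $\vol(R) \leq \vol(G)/3$ enters — is verifying that $E \geq 0$. I expect this to reduce to an elementary single-variable inequality in $r := \vol(R)/\vol(G) \in [0, 1/3]$, which should follow by direct algebraic computation, with the slack provided by $r \leq 1/3$ and the non-negativity of $e(R)$ ensuring that the negative contribution from the $(\vol(G) - \vol(R))^2$ expansion is absorbed. Once $E \geq 0$ is established, the lemma follows immediately.
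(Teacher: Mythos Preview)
Your outline is essentially the paper's own argument: expand coverage and degree tax, set $r = \vol(R)/\vol(G)$, and isolate a residual $E$ whose non-negativity would finish the proof. Carrying your computation through with $m=\vol(G)/2$ and $\partial(R):=e(A,R)+e(B,R)=\vol(R)-2e(R)$ gives
\[
E \;=\; -\frac{\partial(R)}{m} + r - \tfrac32 r^2 \;=\; -\frac{2\,\partial(R)}{\vol(G)} + r - \tfrac32 r^2 \;=\; \frac{4e(R)}{\vol(G)} - r - \tfrac32 r^2,
\]
which is the same residual the paper reaches.

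The gap is at the last step. Your expectation that ``the non-negativity of $e(R)$ ensur[es] that the negative contribution \ldots\ is absorbed'' is not borne out: when $e(R)=0$ one gets $E=-r-\tfrac32 r^2<0$ for every $r>0$, so the hypothesis $r\le 1/3$ does not help. A concrete witness is the path $a_1a_2\,v\,b_2b_1$ with $A=\{a_1,a_2\}$, $B=\{b_1,b_2\}$, $R=\{v\}$: here $r=\vol(R)/\vol(G)=2/8\le 1/3$, $e(A,B)=0$, $\vol(A)=\vol(B)$, yet $q_\cA(G)=5/32<1/2$. The paper's proof trips on exactly this point, writing ``$\partial(R)/m=\partial(R)/(2\vol(G))$'' whereas, since $\vol(G)=2m$, the correct relation is $\partial(R)/m=2\,\partial(R)/\vol(G)$; this factor-of-four slip is what turns the residual into the non-negative quantity $\tfrac12 r-\tfrac32 r^2$. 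As stated the lemma is false, so neither your plan nor the paper's argument can be completed without an additional correction term.
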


The proof of Lemma~\ref{lem.nearlybisection} is straightforward and so we defer the details to the appendix, see page~\pageref{lem.nearlybisectionagain}.

\begin{proof}[Proof of Theorem~\ref{theorem_withoutcutoffs}]
Since $G$ satisfies $\Delta^9 \in \left[1,\frac{n}{6}\right)$, it follows from Proposition~\ref{partitionproposition}, taking our vertex weights to be the degrees of the vertices, that there exists a partition $\{A, B, R\}$ of the vertices of $G$ with $\abs{A} = \abs{B}$ and such that
\begin{equation}\label{blackboxbound_eAB}
    e_G(A, B) \leq  \frac{1}{2} \sum_{i=1}^n d_i \left(\frac{1}{2} - \frac{c}{\sqrt{d_i}}\right),
\end{equation}
\begin{equation}\label{obscurelemmabound}
    \abs{\Vol_G(A) - \Vol_G(B)} \leq \Delta^9(\Delta - \delta) \leq \Delta^{10}
\end{equation}
and the remainder $R$ satisfies $\abs{R} \leq \Delta^9$ and $\max_{v\in R} d_v \leq 2\bar{d}$.
			
We now prove a lower bound on the modularity score of the partition $\{A, B, R\}$ and hence a lower bound on the modularity value of $G$. Recalling that $\sum_i d_i = 2e(G)$ and $e(G)=n\bar{d}/2$ the bound \eqref{blackboxbound_eAB} gives
\begin{equation}\label{blackboxbound_eAB_massaged}
    e_G(A, B) \leq  \frac{e(G)}{2} - \frac{c}{2}\sum_{i=1}^n \sqrt{d_i} = e(G)\bigg( \frac{1}{2} - \frac{c}{n}\sum_{i=1}^n \frac{\sqrt{d_i}}{\bar{d}}\bigg).
\end{equation}
To bound the volume of $R$ 
\[ \vol(R) \leq |R| \max_{v \in R} d_v \leq 2 \Delta^9 \bar{d}, \]
and thus for $\Delta^9 \leq n/6$, we have $\vol(R)\leq \vol(G)/3$ and so can apply~Lemma~\ref{lem.nearlybisection}. Now substituting the bounds in \eqref{obscurelemmabound} and \eqref{blackboxbound_eAB_massaged} into Lemma~\ref{lem.nearlybisection} gives the desired result.\end{proof}

\subsection{Modularity bounds without the max degree condition}
\label{sec:with_cutoffs}
In this section we prove our main theorem. The idea here is that we can apply the same method as we did for Theorem \ref{theorem_withoutcutoffs} for the bulk of the graph, and then deal with the high-degree vertices separately. By doing so, we can remove the condition on the maximum degree, instead replacing it with a mild condition on the upper tail.

The proof still uses the weight-balanced bisection with few edges across the parts to gain its main term, however we will now apply it just to a subgraph $G[L]$, where $L$ is the set of vertices whose degree is at most a constant multiple of the average degree. 

We then assign the vertices in $[n]\setminus L$ randomly to the two parts of our partition, using one method for vertices with degree at most $\sqrt{n}$ and one for the rest. Unlike in the bulk, where the weight-balanced bisection actually gains us our main term, in this part we can only hope to keep the additional error terms small, since our random assignments do not really use the structure of the graph.

Note the following simple expression for two-part modularity (which follows for example from Lemma~\ref{lem.nearlybisection} by taking $R$ to be the empty set). 
\begin{remark}\label{remark_twopartscomputation}
		For any graph $G$ and partition $\{A, B\}$ of its vertices, we have
		$$q(G, \{A, B\}) = \frac{1}{2} - \frac{e(A,B)}{m} - \left(\Vol(A) - \Vol(B)\right)^2\frac{1}{8m^2}$$
\end{remark}
We are now ready to prove Theorem~\ref{t:main}, which we restate here as a proposition with explicit error terms.
\begin{prop}
    \label{prop:main}
    Let $G$ be an $n$-vertex graph with average degree $\bar d \geq 1$, $L = \left\{v \in G\given d_v < C \bar{d}\right\}$ for some $C > 1$, and assume that  $\Vol(L) \geq (1 + \gamma) m = (1 + \gamma)\frac{n \bar d}{2}$ for some $\gamma >0$.  If $\vartheta = (C \bar d)^{10}n^{-1} < \frac{1}{2}\left(1 - \frac{1}{C}\right)$, then
\begin{align*}
    \q(G) &\geq \frac{0.26}{\sqrt{C \bar{d}}} \left(\gamma - \frac{2 \vartheta}{C \bar d}  \right) - \frac{\vartheta^2}{2 \bar d^2} - \frac{3}{8\sqrt{n}}-\frac{4\Delta(G)^2}{n^2 \bar d^2}.
\end{align*}
Moreover, if $\frac{\Delta(G)}{n} = o(1)$ and $\vartheta=o(1)$, then $\q(G)\geq \frac{0.26 \gamma}{\sqrt{C \bar{d}}}(1+o(1))$.
    \end{prop}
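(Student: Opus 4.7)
The plan is to apply the weight-balanced bisection (Proposition~\ref{partitionproposition}) to the bulk subgraph $G[L]$, whose maximum degree is at most $C\bar d$, and then handle the high-degree set $H = V(G)\setminus L$ via randomization. The main positive contribution to modularity comes from the Alon-type bisection of $L$; for $H$ we only need randomized/deterministic assignments that keep the error terms under control.

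First, I would verify that the hypothesis $\vartheta < \tfrac{1}{2}(1 - 1/C)$, together with the lower bound $|L| \geq n(2C-1)/(2C)$ (which follows from $\Vol(H) \leq (1-\gamma)m$ and the fact that vertices of $H$ have degree at least $C\bar d$), implies the condition $(C\bar d)^9 < |L|/2$ needed to apply Proposition~\ref{partitionproposition} to $G[L]$. Applying it with vertex weights $w_v = d_v^G$ produces a tripartition $\{A_0, B_0, R_0\}$ of $L$ with $|A_0| = |B_0|$, $|R_0| \leq (C\bar d)^9$, the Alon bound on $e_{G[L]}(A_0, B_0)$, weight imbalance $|\Vol_G(A_0) - \Vol_G(B_0)| \leq (C\bar d)^{10} = \vartheta n$, and $\max_{v \in R_0} d_v^G = O(\bar d)$ from item~5 of the proposition.

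Next, I would build a random bisection $\{A, B\}$ of $V(G)$ by taking $A_0 \subseteq A$, $B_0 \subseteq B$, splitting $H \cup R_0$ by degree into $H_1 = \{v : d_v \leq \sqrt n\}$ and $H_2$ (the rest), assigning each vertex of $H_1$ independently and uniformly at random to one side, and pairing the vertices of $H_2$ in decreasing-degree order with alternating assignment within each pair. Since $\q(G) \geq \mathbb{E}[q_{\{A,B\}}(G)]$, it suffices to lower-bound this expectation via Remark~\ref{remark_twopartscomputation}. For $\mathbb{E}[e(A,B)]$, edges inside $G[L'] := G[L\setminus R_0]$ contribute deterministically (Alon-bounded using $d_v^{G[L']} \leq C\bar d$, so $c/\sqrt{d_v^{G[L']}} \geq c/\sqrt{C\bar d}$), while every other edge contributes $1/2$ in expectation; combined with $e(G[L']) \geq \gamma m - O(\Vol(R_0))$, this produces the main term $\tfrac{c\gamma}{\sqrt{C\bar d}}$ and the $\tfrac{2\vartheta}{C\bar d}$ error inside the parentheses. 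For the degree tax, decompose $\Vol(A) - \Vol(B) = D_L + D_{H_1} + D_{H_2}$, where $D_L$ is deterministic with $|D_L| \leq \vartheta n$, $D_{H_2}$ is deterministic with $|D_{H_2}| \leq \Delta(G)$ by the pairing construction, and $D_{H_1}$ is mean-zero with variance $\sum_{v \in H_1} d_v^2 \leq \sqrt n \cdot \Vol(H_1)$; substituting into $\mathbb{E}[(\Vol(A) - \Vol(B))^2]/(8m^2)$ yields the $\tfrac{\vartheta^2}{2\bar d^2}$, $\tfrac{3}{8\sqrt n}$, and $\tfrac{4\Delta(G)^2}{n^2\bar d^2}$ error terms. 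The Moreover assertion is then immediate, since each error term is of strictly smaller order than $\tfrac{0.26\gamma}{\sqrt{C\bar d}}$ under the stated asymptotic hypotheses.

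The main technical challenge is the careful accounting of all these error contributions. The split of $H$ into $H_1$ and $H_2$ is forced because a purely random assignment on $H_2$ would give variance $\sum_{v \in H_2} d_v^2$ on the order of $\Delta(G)^2 |H_2|$, far too large for the target bound; the deterministic pairing on $H_2$ gives only the much smaller $\Delta(G)$ imbalance. For $R_0$, one uses item~5 of Proposition~\ref{partitionproposition} (the bound $\max_{v\in R_0} d_v^G = O(\bar d)$) to control $\Vol(R_0)$ tightly enough that the coverage loss $e(G[L]) - e(G[L']) \leq \Vol(R_0)$ is absorbed into the stated error term $\tfrac{2\vartheta}{C\bar d}$ inside the parentheses.
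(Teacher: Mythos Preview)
Your proposal is correct and follows essentially the same approach as the paper: apply Proposition~\ref{partitionproposition} to $G[L]$, then split $H\cup R_0$ at the $\sqrt{n}$ degree threshold, pairing the very-high-degree vertices (with one from each pair going to each side, randomly) and assigning the rest independently at random, and compute the expected modularity via Remark~\ref{remark_twopartscomputation}. The only cosmetic differences are that the paper uses $d_v' = d_v^{G[L]}$ rather than $d_v^G$ as the weights in Proposition~\ref{partitionproposition}, and it explicitly tracks the small extra cross-edge contribution from edges of $G$ that coincide with the high-degree matching (this is where one of the $n^{-1/2}$ terms comes from, so your ``every other edge contributes $1/2$ in expectation'' needs that correction).
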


\begin{proof}
 To prove the bound, we randomly construct a bipartition with expected modularity score at least as claimed, and thus conclude that there exists a bipartition achieving at least that score. As in Theorem~\ref{theorem_withoutcutoffs}, we use the weight-balancing result, Proposition~\ref{partitionproposition}, this time applying it to just the low-degree vertices, $L$, to get a partition into $\{A, B, R\}$. For the random partitioning step, we take the remainder $U=L \cup R$ and randomly divide it into two parts $U_A$ and $U_B$. (Here, we break into vertices $U^+$ with degree at least $n^{1/2}$ and the remainder $U^-$ and have slightly different procedures for $U^+$ and $U^-$.)

Let $G' = G[L]$ and $S = V(G)\setminus L$. Given a vertex $v \in L$, let $d_v'$ be the degree of $v$ in $G'$, that is, its number of neighbours in { $L$}. 
We will apply Proposition \ref{partitionproposition} to the graph $G'$, using the degrees $d_v'$ as our vertex weights. This will require us to bound the maximum degree in $G'$ in terms of the number of vertices of $G'$, that is, in terms of $\abs{L}$.
			
Observe that $\abs{L} = n - \abs{S} > n - \frac{n}{C} = n\left(1 - \frac{1}{C}\right)$ by Markov's inequality, and so we get that 
\begin{align*}
    \left(\max_{v \in G'} d_v'\right)^9 \leq \left(C\bar{d}\right)^{10}
    < \left(1 - \frac{1}{C}\right)\frac{n}{2} < \frac{\abs{L}}{2}
\end{align*}
where we, for the first inequality, used that the maximum degree of $G'$ is at most $C\bar{d}$ by construction, and the second inequality follows from our assumption that $\vartheta = (C \bar d)^{10}n^{-1} < \frac{1}{2}\left(1 - \frac{1}{C}\right)$. 
			
Thus $G'$ will satisfies the condition of Proposition \ref{partitionproposition}, and we get a partition $\{A,B,R\}$ of $V(G')$, and thus a partition $\{A,B,R,H\}$ of the vertices of $G$. Since we cut off the vertices of the highest degree, we get the following guarantees on this partition: 
\begin{enumerate}
    \item $\abs{A} = \abs{B}$,
    \item $\abs{R} < (C\bar{d})^9$,
    \item $e(A,B) \leq  \frac{1}{2} \sum_{v \in A \cup B} d_v' \left(\frac{1}{2} - \frac{c}{\sqrt{d_v'}}\right)$,
    \item $\abs{\Vol(A) - \Vol(B)} \leq (C\bar{d})^9(C\bar{d} - \delta) \leq (C\bar{d})^{10}$,
    \item $\max_{v \in R} d_v \leq 2\bar{d}$.
\end{enumerate}
			
Our strategy will be to divide the vertices we do not have degree bounds for -- the ones in $H$ and $R$ -- randomly into $A$ and $B$, and use this randomness to control their contribution to the modularity. As before, the positive contribution to the modularity score will come the fact that there are relatively few edges between $A$ and $B$.
			
Let $U = H \cup R$, and let $\{U_A, U_B\}$ be a partition of $U$. We will first perform some general computations, and then just after~\eqref{termtwobound} we specify the random procedure to partition $U$ into $\{U_A, U_B\}$.
By Remark~\ref{remark_twopartscomputation}, we see that
\begin{align}
    q(G, \{A \cup U_A, B \cup U_B\}) &= \frac{1}{2} - \frac{e(A \cup U_A, B \cup U_B)}{m} - \frac{(\Vol(A \cup U_A) - \Vol(B \cup U_B))^2}{8m^2}.\nonumber
\end{align}
Now substituting $m = e(A \cup B) + e(U) + e(A \cup B, U)$ and \[e(A \cup U_A, B \cup U_B)=e(A,B) + e(U_A, B) + e(A, U_B) + e(U_A, U_B)\] we can compute that
\begin{align*}
     \frac{1}{2} - \frac{e(A \cup U_A, B \cup U_B)}{m} &=  
    \frac{e(A\cup B) - 2e(A,B)}{2m} + \frac{e(U) + e(A\cup B, U)}{2m}\\
    &\qquad - \frac{e(U_A,U_B) + e(U_A, B) + e(A, U_B)}{m}.
\end{align*}
Then, we observe that 
\begin{align*}
    \left(\Vol(A \!\cup \!U_A) - \Vol(B\! \cup\! U_B)\right)^2 
    =   & \; (\Vol(A) - \Vol(B))^2 + 2(\Vol(A)-\Vol(B))(\Vol(U_A)- \Vol(U_B))\\
        &  + (\Vol(U_A) - \Vol(U_B))^2
\end{align*}
and so, taking this and our previous computations we have the following expression for the modularity score
\begin{align}
    q(G, \{A \cup U_A, B \cup U_B\}) &= \frac{e(A \cup B) - 2e(A,B)}{2m} \label{termone}\\
    &- \frac{(\Vol(A) - \Vol(B))^2}{8m^2}\label{termtwo}\\
    &+ \frac{e(U) + e(A\cup B, U)}{2m} - \frac{e(U_A,U_B) + e(U_A, B) + e(A, U_B)}{m}\label{termthree}\\
    &- \frac{(\Vol(A)-\Vol(B))(\Vol(U_A) - \Vol(U_B))}{4m^2}\label{termfour}\\
    &- \frac{(\Vol(U_A) - \Vol(U_B))^2}{8m^2}\label{termfive}
\end{align}
and thus we have five different terms which we will consider in turn.
Firstly, for \eqref{termone}, we use that
    $$e(A,B) \leq \frac{1}{2}\sum_{v \in A \cup B} d_v' \left(\frac{1}{2} - \frac{c}{\sqrt{d_v'}}\right) = \frac{1}{2}e(A \cup B) - \frac{c}{2}\sum_{v\in A \cup B} \sqrt{d_v'}$$
and so \eqref{termone} is bounded below by
\begin{equation}\label{termonebound}
\frac{c}{2m}\sum_{v\in A \cup B} \sqrt{d_v'}.\end{equation}
For \eqref{termtwo} we use our bound on $\abs{\Vol(A) - \Vol(B)}$ to see that
\begin{equation}\label{termtwobound}\frac{\left(\Vol(A) - \Vol(B)\right)^2}{8m^2} \leq \frac{(C\bar{d})^{20}}{8m^2}.\end{equation}
Now, upon reaching the terms that involve $U_A$ and $U_B$, we specify how these sets are chosen.\\

\begin{figure}
    \includegraphics[width=\textwidth]{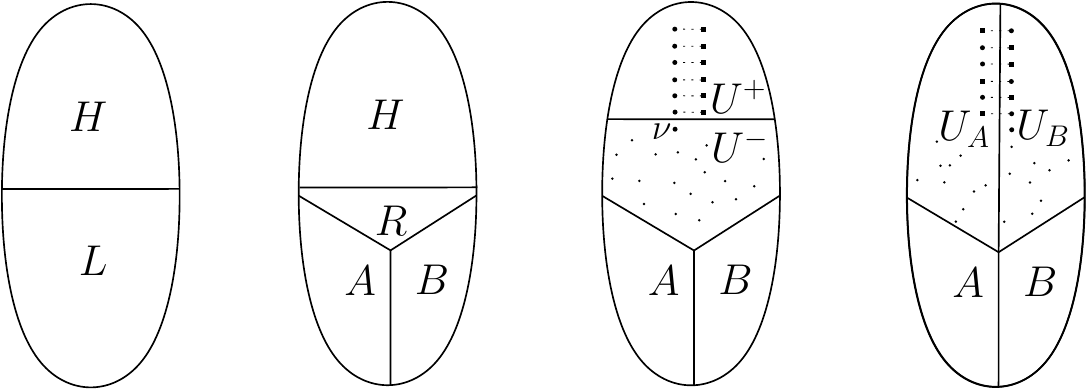}
    \centering
    \caption{\footnotesize 
    { An illustration of the procedure for choosing the partition $\{ A \cup U_A, B \cup U_B\}$. (i) Let $L$ be the set of (low degree) vertices, those of degree at most $C\bar{d}$ and $H$ be the (high degree) vertices, $H=[n] \backslash L$. (ii)~Apply the weight-balancing result to $L$ to get a partition into $A, B$ and a small (remainder) set $R$. (iii) Let $U=H\cup R$, and further divide this into (very high degree) $U^+$, an even number of vertices of degree at least~$n^{1/2}$ and let $U^-=U\backslash U^{+}$ and note $U^-$ may contain one vertex $\nu$ with degree at least $n^{1/2}$. In $U^+$, match the highest degree vertex to the second highest degree vertex, and the third highest to the fourth highest etc. (iv) For each pair in $U^+$ randomly place one endpoint in $U_A$ and the other in $U_B$; for each vertex in~$U^-$ randomly place it in $U_A$ or $U_B$.}}    \label{fig:randombipartition}
\end{figure}

\needspace{4\baselineskip}
\noindent\emph{Random procedure for choosing $U_A$ and $U_B$ (see also Figure~\ref{fig:randombipartition}).}\\
Let $U^+ \subseteq H \subseteq U$ be the set of vertices of degree at least $n^{1/2}$ in $G$, with potentially one vertex less to make $\abs{U^+}$ even. 

Firstly, pick a perfect matching $\mathcal{M}$ on $U^+$, matching the highest-degree vertex to the second-highest-degree, the third highest to the fourth, and so on. 
Secondly, for each edge $xy \in \mathcal{M}$, choose uniformly at random whether to put $x$ in $U_A$ and $y$ in $U_B$, or vice versa. Thirdly, the vertices of $U^-= U \setminus U^+$ get placed into $U_A$ or~$U_B$ independently at random with probability $\frac 12.$ We emphasise that $U^-$ might contain one vertex $\nu$ with $d_{\nu} \geq n^{1/2}$, and the remaining vertices have degree at most $n^{1/2}$.

Note that, since $d_v \geq n^{1/2}$ for $v\in U^+$, we have $n^{1/2}|U^+| \leq \vol(U^+)$. Moreover, $U^+\subseteq H$ and so $\vol(U^+)\leq m$ and thus $\abs{U^+}\leq m/n^{1/2}$. Hence
$$|E(G)\cap \mathcal{M}| \leq \frac{|U^+|}{2} \leq { \frac{m}{2n^{1/2}}}.$$
Having defined our choice of $U_A$ and $U_B$, we can compute the expectation of the remaining terms of the modularity score, i.e.\ \eqref{termthree}-\eqref{termfive}. Starting with the random part of \eqref{termthree}, we get
\begin{align*}
    \E{e(U_A,B)} &= \mathbb{E}\Bigg[ \sum_{
                  ub \in E(U,B)}  \ind{u \in U_A} \Bigg]   = \frac{1}{2}e(U,B) 
\end{align*}
and likewise $\E{e(A, U_B)} = \frac{1}{2}e(A,U)$. Moreover, since for $x, y \in U$ that are not matched by~$\mathcal{M}$, their assignment to parts is independent, while for $xy \in M'$ { the endpoints $x$ and $y$ are deterministically assigned} to different parts, we have
\begin{align*}
    \E{e(U_A,U_B)} 
    &= { \tfrac{1}{2}| E(U) \backslash \mathcal{M}|  + |E(U) \cap \mathcal{M}| =  \tfrac{1}{2}e(U) } + \tfrac{1}{2}|E(U) \cap \mathcal{M}|  
    \leq \frac{1}{2}e(U) + { \frac{m}{4n^{1/2}}}.
    \end{align*}
In total, the expectation of the random part of \eqref{termthree} is bounded below by
    $$\frac{e(U,B) + e(A,U) + e(U) }{2m} + { \frac {1}{4n^{1/2}}}$$
which then cancels nearly exactly with the deterministic first half, leaving us with a lower bound on the expectation of \eqref{termthree} of form
\begin{equation}\label{termthreebound} {  
- \frac{1}{4n^{1/2}} } . \end{equation}
For \eqref{termfour}, we compute that
\begin{align*}
    \E{\Vol(U_A) - \Vol(U_B)} &= \E{\sum_{v\in U_A} d_v - \sum_{v\in U_B} d_v}
    = \E{\sum_{v\in U} d_v(\ind{v \in U_A} - \ind{v \in U_B})}=0.
\end{align*}
and thus \eqref{termfour} term has expectation zero.
{Finally, for \eqref{termfive}, writing $U_A^+=U_A \cap U^+$ and $U_A^-=U_A \cap U^-$, and defining $U_B^+, U_B^-$ similarly, we first note} 
\begin{align}
\left(\Vol(U_A) - \Vol(U_B) \right)^2  
    & = \left(\Vol (U_A^+) - \Vol(U_B^+) + \Vol(U_A^-) - \Vol(U_B^-)\right)^2 \nonumber \\
    & \leq 2 \left(\Vol (U_A^+) - \Vol(U_B^+) \right)^2 + 2 \left(\Vol (U_A^-) - \Vol(U_B^-) \right)^2. \label{eq:vol-two-terms} 
\end{align}
For the first term of \eqref{eq:vol-two-terms}, Lemma~\ref{lem.obscure} (the load balancing lemma) gives the deterministic bound  $\large|\Vol (U_A^+) - \Vol(U_B^+)\large| \leq \Delta$, where $\Delta$ is the maximum degree of $G$, since we may take $L=1$ in the application of Lemma~\ref{lem.obscure}. The contribution of $U^-$ is 
 \begin{align*}
    &\E{(\Vol(U_A^-) - \Vol(U_B^-))^2} = \mathbb{E} \Bigg[ \bigg(\sum_{v \in U^-} d_v^2 \Big(\ind{v \in U_A } - \ind{v \in U_B} \Big) \bigg)^2\Bigg] \\
    & \quad = \sum_{v \in U^-} d_v^2 \leq d_{\nu}^2 + n^{1/2} \sum_{v \in U^-} d_v  \leq \Delta^2 + n^{1/2}m;
\end{align*}
    recalling that $d_{\nu}$ comes from a potentially unmatched high-degree vertex $\nu$. 
    Thus by~\eqref{eq:vol-two-terms} and using $m \geq n$, we conclude the expected value of~\eqref{termfive} is at least
    \begin{align}\label{termfivebound}
        -\frac{ \E{\left(\Vol(U_A) - \Vol(U_B) \right)^2  }}{8m^2} \geq -\frac{\Delta^2}{m^2} - \frac{ n^{1/2} }{8m} \geq -\frac{\Delta^2}{m^2} - \frac{1}{8n^{1/2} }.
    \end{align}
We may take an instance of the random partition, $\tilde{U}_A$ and $\tilde{U}_B$ say, for which the modularity score of $q(G, A \cup \tilde{U}_A, B \cup \tilde{U}_B)$ is bounded below by our bound on the expectation of the modularity score of the random partition. 
Gathering our calculations - terms~\eqref{termone} and \eqref{termtwo} are bounded in~\eqref{termonebound} and~\eqref{termtwobound}, and the expectation of terms~\eqref{termthree}-\eqref{termfive} are bounded by line~\eqref{termthreebound}, 0  and line~\eqref{termfivebound} respectively. Thus,
\begin{align}\label{withoutcutoffs_unsimplifiederrorterms}
    q(G, \{A \cup \tilde{U}_A, B \cup \tilde{U}_B\}) &\geq \frac{c}{2m}\sum_{v \in A \cup B} \sqrt{d_v'} 
    - \frac{(C\bar{d})^{20}}{8m^2} \textcolor{violet}{ 
    - { \frac{3}{ 8n^{1/2} } }
    - \frac{\Delta^2}{m^2}}.
\end{align}
It remains to simplify the lower bounds in~\eqref{withoutcutoffs_unsimplifiederrorterms}. We have
\begin{align}\label{withoutcutoffs_claimapplied}
    \frac{c}{2m}\sum_{v \in A \cup B} \sqrt{d_v'} 
    &\geq \frac{c}{2m}\sum_{v \in A \cup B} \frac{d_v'}{\max_{w \in A\cup B}\sqrt{d_w'}}\nonumber\\
    &= \frac{c\Vol_{G'}(A \cup B)}{2m\left(\max_{v \in A\cup B} \sqrt{d'_v}\right)}\nonumber\\
    &\geq \frac{c\Vol_{G'}(A \cup B)}{2m\sqrt{C \bar{d}}}.
\end{align}
We now consider $\Vol_{G'}(A \cup B)$. Recall that we defined $G' = G[L]$, and $L = A\cup B \cup R$. We compute that
\begin{align*}
    \Vol_{G'}(A \cup B) &= \Vol_{G'}(G') - \Vol_{G'}(R)\\
    &= \Vol_{G}(L) - e_G(H,L) - \Vol_{G'}(R)\\
    &\geq \Vol_{G}(L) - \Vol_G(H) - \Vol_G(R)
\end{align*}
and so since $\Vol_G(L) = (1+\gamma)m$ and $\Vol_G(H)=(1-\gamma)m$ by assumption, we get that $\Vol_{G'}(A \cup B) \geq 2\gamma m - \Vol_G(R)$. Moreover,  $\Vol_G(R)\leq |R|\max_{v\in R} d_v \leq (C\bar d)^9\cdot 2 \bar d = 4m(C\bar d)^9 / n$ by~Proposition \ref{partitionproposition}. 
Substituting this into \eqref{withoutcutoffs_claimapplied}, we get that
$$\frac{c}{2m}\sum_{v \in A \cup B} \sqrt{d_v'} \geq \frac{c\Vol_{G'}(A \cup B)}{2m\sqrt{C \bar{d}}} \geq \frac{c\gamma}{\sqrt{C\bar{d}}} - \frac{2c  (C \bar d)^9}{n\sqrt{C\bar{d}}}.$$
Hence, \eqref{withoutcutoffs_unsimplifiederrorterms} implies that
\begin{align}\label{boundsinserted}
    q(G, \{A \cup \tilde{U}_A, B \cup \tilde{U}_B\}) & \geq 
    		 \frac{c\gamma}{\sqrt{C\bar{d}}} 
        - \frac{2c  (C \bar d)^9}{n\sqrt{C\bar{d}}} 
        - \frac{(C\bar{d})^{20}}{8m^2}
  		  - \frac{3}{8n^{1/2}} - \frac{\Delta^2}{m^2},
\end{align}
and it remains to express two of the error terms in terms of $\vartheta = (C\bar d)^{10}n^{-1}$. Namely, 
$${\frac{2c  (C \bar d)^9}{n\sqrt{C\bar{d}}} } = \frac{2c \vartheta}{(C \bar d)^{3/2}} \text{\quad and} $$
$$\frac{(C\bar{d})^{20}}{8m^2} = \frac{\vartheta^2 n^2}{2n^2 \bar d^2} = \frac{ \vartheta^2}{2 \bar d^2}.$$
Gathering terms and simplifying, we get the final form of our theorem, stating that
\begin{align*}
    \q(G) \geq q(G, \{A \cup \tilde{U}_A, B \cup \tilde{U}_B\}) &\geq \frac{c }{\sqrt{C \bar{d}}} \left(\gamma - \frac{2 \vartheta}{C \bar d} - \right) - \frac{\vartheta^2}{2 \bar d^2} - \frac{3}{8n^{1/2} }-\frac{\Delta^2}{m^2}.
\end{align*}
as desired. Recall that $c >0.26$. It follows that
if $\frac{\Delta}{m} = o(1)$ and $\vartheta=o(1)$, then
 $$\q(G)\geq \frac{0.26 \gamma}{\sqrt{C \bar{d}}}(1-o(1)).$$\end{proof}
The following proposition may be useful to get better constants in some situations, mainly because  we do not lose the $1/ \sqrt{C}$ in the main term.
	\begin{prop}
		Let $G$ be an $n$-vertex graph with average degree $\bar d$, and let $L = \{v \in [n]: d_v \geq C \bar d \}$ for some constant $C \geq 2$. Let $(d_v')_{v \in L}$ be the degree sequence of $G[L]$, and $\vartheta:=(C \bar d)^{10}n^{-1}<1$. Then
        \begin{align}\label{eq:prop-removed-degs}
            \q(G) &\geq \frac{c}{2 n \bar d}\sum_{v \in L} \sqrt{d_v'} -\frac{\vartheta^2}{2 \bar d^2}
            - \frac{3}{8n^{1/2} }
            - \frac{\Delta^2}{4(n \bar d)^2}.
        \end{align}
        	\end{prop}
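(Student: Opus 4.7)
The strategy is to run the argument of Proposition~\ref{prop:main} verbatim but to stop short of the final simplifications: the proposition of Proposition~\ref{prop:main} absorbs $\sum_{v \in A \cup B} \sqrt{d_v'}$ into the clean form $\gamma / \sqrt{C \bar d}$ by using the crude bound $\sqrt{d_v'} \leq \sqrt{C \bar d}$ together with the hypothesis $\Vol(L) \geq (1+\gamma)m$; here we simply keep the sum $\sum_{v \in L} \sqrt{d_v'}$ as is, which is sometimes substantially larger than the aforementioned bound and which does not need the $\gamma$ assumption.

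Concretely, I would first apply Proposition~\ref{partitionproposition} to $G' = G[L]$, weighted by the $G[L]$-degrees, to obtain a partition $\{A, B, R\}$ of $L$ with $|R| \leq (C \bar d)^9$ and $\max_{v \in R} d_v \leq 2 \bar d$. The hypothesis $\vartheta = (C \bar d)^{10} n^{-1} < 1$ together with $C \geq 2$ guarantees $(C \bar d)^9 \leq |L|/2$, which is the required size hypothesis for Proposition~\ref{partitionproposition}. Then I would apply the same randomised partition of $U = H \cup R$ as in the proof of Proposition~\ref{prop:main}, splitting $U$ into the high-degree block $U^+$ (vertices of degree $\geq \sqrt n$, paired by the deterministic matching $\mathcal M$) and its complement $U^-$ (assigned independently and uniformly). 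All five modularity terms \eqref{termone}--\eqref{termfive} are estimated exactly as before, yielding the analogue of \eqref{withoutcutoffs_unsimplifiederrorterms},
\[
    \q(G) \;\geq\; \frac{c}{2m}\sum_{v \in A \cup B} \sqrt{d_v'} \;-\; \frac{(C \bar d)^{20}}{8 m^2} \;-\; \frac{3}{8 \sqrt n} \;-\; \frac{\Delta^2}{m^2}.
\]

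To reach the claimed form it remains to (i) replace the sum over $A \cup B$ by one over $L = (A \cup B) \cup R$ and (ii) rewrite the error terms in terms of $n \bar d = 2m$. For (i), since $|R| \leq (C \bar d)^9$ and $d_v' \leq 2 \bar d$ on $R$, the contribution $\sum_{v \in R} \sqrt{d_v'}$ is of order $(C \bar d)^9 \sqrt{\bar d}$ and can be absorbed into the factor $\tfrac{1}{2}$ that changes $\tfrac{c}{2m}$ into $\tfrac{c}{4m} = \tfrac{c}{2 n \bar d}$; roughly, whenever the claimed bound is not already negative one has $\sum_{v \in R}\sqrt{d_v'} \leq \tfrac12 \sum_{v \in L}\sqrt{d_v'}$, and in the remaining regime the proposition is vacuous. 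For (ii) the degree-tax term becomes $\vartheta^2/(2 \bar d^2)$ by direct substitution, and the $\Delta^2/m^2$ error coming from \eqref{termfivebound} can be tightened: in the bound $\Vol(U_A^+) - \Vol(U_B^+) \leq \Delta$ obtained from Lemma~\ref{lem.obscure} one actually has the sharper constant $\Delta/2$ since $\mathcal M$ pairs vertices of nearly equal degree, which after squaring and dividing by $8m^2 = 2(n\bar d)^2$ yields exactly $\Delta^2/(4(n\bar d)^2)$.

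The only delicate point I expect is tracking the factor of two in the main term when passing from $\sum_{A\cup B}$ to $\sum_L$; everything else is essentially a bookkeeping rewrite of the proof of Proposition~\ref{prop:main}. Because this is not really a new argument but a stopping-point variant, I would present the proof as a short paragraph pointing the reader to the places in the proof of Proposition~\ref{prop:main} that need to be modified, rather than repeating the whole computation.
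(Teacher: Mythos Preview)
Your overall strategy is exactly the paper's: rerun the proof of Proposition~\ref{prop:main} down to the intermediate bound~\eqref{withoutcutoffs_unsimplifiederrorterms}, rewrite the error terms using $2m = n\bar d$, and then replace $\sum_{v\in A\cup B}\sqrt{d_v'}$ by $\tfrac12\sum_{v\in L}\sqrt{d_v'}$. The paper presents this precisely as a short paragraph pointing back to that proof, as you suggest.

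Where you diverge is in the justification of step (i). Your proposed case analysis (``whenever the claimed bound is not already negative\ldots'') is vaguer than necessary and uses the estimate $d_v'\le 2\bar d$ on $R$, which is not quite what Proposition~\ref{partitionproposition} gives (it gives $\max_{v\in R} w_v\le 2\bar w$ for the \emph{average weight} $\bar w$, i.e.\ the average of $d_v'$ over $L$, not $\bar d$). The paper's argument is much simpler: by item~2 of Proposition~\ref{partitionproposition}, the set $R$ lies in the final segment of the weight-ordering, so every $d_w'$ with $w\in R$ is at most every $d_v'$ with $v\in A\cup B$. Since $|R|\le (C\bar d)^9\le |A\cup B|$ (using $\vartheta<1$ and $C\ge 2$), a term-by-term comparison gives $\sum_{v\in R}\sqrt{d_v'}\le\sum_{v\in A\cup B}\sqrt{d_v'}$ immediately, hence $\sum_{v\in A\cup B}\sqrt{d_v'}\ge\tfrac12\sum_{v\in L}\sqrt{d_v'}$. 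No case split is needed.

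Your argument in (ii) for sharpening the $\Delta$-term is not valid. Lemma~\ref{lem.obscure} with $L=1$ gives only $|\Vol(U_A^+)-\Vol(U_B^+)|\le |f(1)-f(|U^+|)|\le\Delta$; there is no reason consecutive degrees in $U^+$ differ by at most $\Delta/2$, so you cannot claim the sharper constant. (Indeed, substituting $m=n\bar d/2$ into the $\Delta^2/m^2$ term of~\eqref{withoutcutoffs_unsimplifiederrorterms} yields $4\Delta^2/(n\bar d)^2$, so the constant $\tfrac14$ in the stated proposition appears to be a typo rather than something you should try to reproduce.)
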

         \begin{proof}
            We follow the proof of Theorem~\ref{t:main} (with the same notation) down to~\eqref{withoutcutoffs_unsimplifiederrorterms}. Recalling that $L = A \cup B \cup R$, we have
            \begin{align}\label{withoutcutoffs_unsimplifiederrortermsb}
                 q(G, \{A \cup U_A, B \cup U_B\}) &\geq \frac{c}{n \bar d}\sum_{v \in A \cup B} \sqrt{d_v'} 
    					- \frac{(C\bar{d})^{20}}{2(n \bar d)^2} { 
    					- \frac{3}{8n^{1/2}}
    					- \frac{\Delta^2}{4(n \bar d)^2}}.
            \end{align}
            It remains to compare $\sum_{v \in A \cup B }\sqrt{d_v'}$ with $\sum_{v \in A \cup B \cup R} \sqrt{d_v'}$. To this end, note that $|R| \leq (C \bar d)^9 \leq \frac{n}{4} \leq \frac{|A \cup B \cup R|}{2}$, and that $d_w' \leq d_v'$ for all $w \in R$ and $v \in A \cup B$. Therefore,
            $$\sum_{v \in A \cup B }\sqrt{d_v'} \geq \sum_{v \in  R} \sqrt{d_v'},$$
            so            
            $\sum_{v \in A \cup B }\sqrt{d_v'} \geq \frac 12 \sum_{v \in  A \cup B \cup R} \sqrt{d_v'}$, which together with~\eqref{withoutcutoffs_unsimplifiederrortermsb} yields~\eqref{eq:prop-removed-degs}.
         \end{proof}

\section{Lower bounds for power-law graphs}\label{sec:power_law}
We will now apply our main result to deduce Theorem~\ref{t:powerlaw}, as well as a more general lower bound in terms of the moments of the degree sequence. Notice that although Theorem~\ref{t:powerlaw} is stated for constant $\bar d$, the bound actually holds for mildly increasing $\bar d$, up to $\bar d = n^{o(1)}$.
\begin{theorem}[restatement of Theorem~\ref{t:powerlaw}]\label{t:powerlaw_restate}
Let $G$ be a graph with degree sequence $\dv = (d_i)_{i \in [n]}$, with average degree $\bar d$, satisfying 
\begin{equation}  \label{eq:powerlaw-hyp_restate}
    \frac 1n |\{i: d_i \geq k \}| \leq A \bar d^{\tau -1} k^{1-\tau}
\end{equation}
for all $i$, with constants $\tau >2$ and $A >0$. For $b = 0.1 \left( \frac{(\tau-2)}{8A} \right)^{\frac {1}{2(\tau -2)}}$ and sufficiently large $n$,
$$\q(G) \geq b \bar d^{-1/2}.$$
\end{theorem}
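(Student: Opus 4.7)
The plan is to apply Proposition~\ref{prop:main}, the quantitative form of Theorem~\ref{t:main}, with a suitably chosen cutoff $C$. Set $L = \{v : d_v < C \bar d\}$ and $H = V \setminus L$. A layer-cake decomposition combined with the tail bound $|\{v : d_v \geq t\}| \leq A n \bar d^{\tau-1} t^{1-\tau}$ yields
\begin{align*}
\Vol(H)
= \int_0^\infty |\{v \in H : d_v > t\}|\,dt
&\leq C \bar d \cdot |H| + \int_{C \bar d}^\infty A n \bar d^{\tau-1} t^{1-\tau}\,dt \\
&\leq \frac{A(\tau-1)}{\tau-2}\, n \bar d\, C^{2-\tau}
= \frac{2A(\tau-1)}{\tau-2}\, m\, C^{2-\tau},
\end{align*}
where I used $|H| \leq A n C^{1-\tau}$ from~\eqref{eq:powerlaw-hyp_restate} applied with $k = C\bar d$.

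I would next choose $C = \bigl(4A(\tau-1)/(\tau-2)\bigr)^{1/(\tau-2)}$ (and take the maximum with any fixed constant greater than $1$ if necessary), so that $\Vol(H) \leq m/2$ and hence $\Vol(L) \geq \tfrac{3}{2}m$. Then Proposition~\ref{prop:main} applies with $\gamma = \tfrac{1}{2}$. The remaining hypotheses are routine: the tail bound forces $\Delta(G) = O\bigl(n^{1/(\tau-1)} \bar d\bigr)$, which is $o(n)$ for constant (or even $n^{o(1)}$) average degree, and $\vartheta = (C\bar d)^{10}/n \to 0$ since $C$ depends only on $\tau$ and $A$. Thus Proposition~\ref{prop:main} gives
\[
\q(G) \;\geq\; \frac{0.26 \cdot \tfrac{1}{2}}{\sqrt{C \bar d}}(1+o(1)) \;=\; \frac{0.13(1+o(1))}{\sqrt{\bar d}}\left(\frac{\tau-2}{4A(\tau-1)}\right)^{1/(2(\tau-2))}.
\]

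The final step, and the one I expect to be the main obstacle, is verifying that this lower bound is at least $b \bar d^{-1/2}$ uniformly in $\tau > 2$. Factoring
\[
\left(\frac{\tau-2}{4A(\tau-1)}\right)^{1/(2(\tau-2))} = \left(\frac{\tau-2}{8A}\right)^{1/(2(\tau-2))} \left(\frac{2}{\tau-1}\right)^{1/(2(\tau-2))},
\]
the target reduces to showing $0.13\bigl(2/(\tau-1)\bigr)^{1/(2(\tau-2))} \geq 0.1$, or equivalently $\log((\tau-1)/2) \leq 2(\tau-2)\log(1.3)$, for all $\tau > 2$. For $\tau \leq 3$ the left-hand side is nonpositive, so the inequality is immediate. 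For $\tau > 3$, a calculus argument (locating the critical point via $(\tau-2)/(\tau-1) = \log((\tau-1)/2)$, at $\tau \approx 5.3$) shows that $\sup_{\tau > 2}\log((\tau-1)/2)/(2(\tau-2)) \approx 0.12$, which is comfortably below $\log(1.3) \approx 0.262$. The resulting slack in the constant $0.1$ of $b$ absorbs the $(1+o(1))$ factor for $n$ sufficiently large, giving $\q(G) \geq b \bar d^{-1/2}$ as required.
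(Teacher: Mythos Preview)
Your proof is correct and follows essentially the same route as the paper: bound $\Vol(H)$ via the tail hypothesis, choose $C = \bigl(4A(\tau-1)/(\tau-2)\bigr)^{1/(\tau-2)}$ to get $\gamma = \tfrac12$, and invoke Proposition~\ref{prop:main}. Your layer-cake computation yields exactly the same bound $\Vol(H) \le \frac{A(\tau-1)}{\tau-2}\,n\bar d\,C^{2-\tau}$ as the paper's summation-by-parts argument.

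The only substantive difference is in extracting the constant $b$. The paper observes at the outset that if~\eqref{eq:powerlaw-hyp_restate} holds for some $\tau'$ it also holds (for the relevant range $k \ge \bar d$) for any $\tau \le \tau'$, so one may assume $\tau \le 3$; then $\tau - 1 \le 2$ gives $C \le (8A/(\tau-2))^{1/(\tau-2)}$ immediately, and the constant $b$ drops out without further work. You instead keep $\tau$ arbitrary and verify $0.13\bigl(2/(\tau-1)\bigr)^{1/(2(\tau-2))} \ge 0.1$ by a one-variable calculus estimate. Both work; the paper's reduction is shorter, while your argument avoids the (slightly informal) monotonicity-in-$\tau$ step.
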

\begin{proof} [Proof of Theorem~\ref{t:powerlaw}] 
For convenience, we may assume that $\tau \leq 3$; indeed, if a sequence satisfies~\eqref{eq:powerlaw-hyp} with some $\tau'$, then it also satisfies it with some smaller value $\tau < \tau'$. 
 
To verify the hypothesis of Proposition of~\ref{prop:main}, let $s_j = |\{i: d_i \geq j \}|$ and note that $s_{j}- s_{j+1} = \abs{\{i: d_i = j \}}$. We have
\begin{eqnarray*}
    \sum_{i \in [n]: d_i \geq k} d_i &= \sum_{j \geq k} j^{} (s_j - s_{j+1}) = \sum_{j \geq k} j s_j - \sum_{j \geq k+1} (j-1) s_j = k s_k + \sum_{j \geq k+1} s_j\\
    & \leq A \bar d^{\tau -1} k^{2-\tau}n + \sum_{j \geq k+1} A\bar d^{\tau -1} j^{1-\tau} n,
\end{eqnarray*}
where we second equality follows by changing the summation variable in the second sum, and the inequality uses the hypothesis on $\dv$.
		
Since 
$$\sum_{j \geq k+1} j^{1-\tau} \leq \int_{k}^{\infty} x^{1-\tau} dx = \frac{1}{\tau-2} k^{2-\tau},$$
we have
$$ \sum_{i \in [n]: d_i \geq k} d_i \leq \left( 1 + \frac{1}{\tau -2} \right) A \bar d^{\tau -1} k^{2-\tau} n.$$
Inserting $k = \left(4 A \cdot \frac{\tau -1}{\tau-2}  \right)^{1/(\tau -2)} \bar d$ and dividing by $n \bar d / 2$, we obtain
$$ \frac{2}{n \bar d} \sum_{i \in [n]: d_i \geq k} d_i  \leq \frac{2A (\tau -1)}{ \tau -2} \cdot \left(4 A \cdot \frac{\tau -1}{\tau-2}  \right)^{-1}  = \frac{1}{2}.$$
Hence
$$\sum_{i \in [n]: d_i < k} d_i  \geq n \bar d  - \frac{n \bar d}{4} = \frac{n \bar d}{2} \left(1 + \frac{1}{2}\right),$$
and the hypothesis of our proposition is satisfied with $\gamma = \tfrac{1}{2}$ and $C =  \left(4 A \cdot \frac{\tau -1}{\tau-2}  \right)^{\frac{1}{\tau -2}} \leq \left( \frac {8A}{\tau-2}\right)^{\frac{1}{\tau -2}}$. Proposition~\ref{prop:main} then implies that
$$\q(G) \geq 0.26 \gamma  \left( \frac{8A}{\tau-2}  \right)^{\frac{-1}{2(\tau -2)}}  \bar d ^{-1/2} -O(\vartheta) - \frac{4 \Delta(G)^2}{n^2 \bar d^2}.$$
Now, recall $\vartheta = (C\bar{d})^{10}/n$ for the value $C$ earlier and thus $\vartheta=O(n^{-1})$. For the other error term note~\eqref{eq:powerlaw-hyp_restate} implies that $\Delta(G) \leq (An)^{-\tfrac{1}{1-\tau}} \bar{d}$. 
It follows that, for sufficiently large~$n$, 
		$$\q(G) \geq 0.1 \left(\frac{8A}{\tau-2}  \right)^{\frac{-1}{2(\tau -2)}}  \bar d ^{-1/2}.$$
	\end{proof}
	Let us also point out a more general statement, which controls modularity in terms of moments of the degree sequence. The moments are one way to capture an assumption that the degree distribution is still `reasonably smooth'. Note that in the statement below, $\kappa$ can be an arbitrarily small positive real number, to circumvent the fact that for some graph classes occurring in practice, not even the second moment of the degree sequence is bounded. This statement formally implies Theorem~\ref{t:powerlaw}, but verifying this implication is as difficult as proving Theorem~\ref{t:powerlaw} directly. 
	\begin{prop}
		\label{prop:moments}
		Let $G$ be a graph with degree sequence $\dv = (d_1, \ldots, d_n)$ whose mean is $\bar d = O(1)$. Suppose  for some $\kappa >0$ and $B>0$,
		\begin{equation}
			\label{eq:beta-moment}
			\sum_{v \in [n]} d_v^{1 + \kappa} \leq B n \bar d ^{1+\kappa}
		\end{equation}
		There is a constant $c'$ such that for sufficiently large $n$, $\q(G) \geq c' \bar d ^{-1/2}$.
	\end{prop}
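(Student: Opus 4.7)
The plan is to reduce to Proposition~\ref{prop:main} by showing that the moment assumption forces most of the edge-volume onto vertices of degree at most a constant multiple of $\bar{d}$. Setting $L = \{v : d_v < C\bar{d}\}$ and $H = V(G) \setminus L$, I need to choose a constant $C > 1$ for which $\Vol(L) \geq (1+\gamma)m$ with $\gamma$ bounded away from zero, and then check the technical conditions $\vartheta = (C\bar{d})^{10}/n = o(1)$ and $\Delta(G)/n = o(1)$.

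The key step is converting the $(1+\kappa)$-th moment bound into a tail volume bound by a one-sided Markov-type argument: $d_v \geq k$ implies $d_v \leq d_v^{1+\kappa}/k^{\kappa}$, so
$$\Vol(H) = \sum_{v:\, d_v \geq C\bar{d}} d_v \leq (C\bar{d})^{-\kappa} \sum_v d_v^{1+\kappa} \leq \frac{Bn\bar{d}}{C^{\kappa}} = \frac{2Bm}{C^{\kappa}}.$$
Taking $C := (4B)^{1/\kappa}$ (which exceeds $1$, since Jensen's inequality forces $B \geq 1$) gives $\Vol(H) \leq m/2$, hence $\Vol(L) \geq 3m/2$, verifying the main hypothesis of Proposition~\ref{prop:main} with $\gamma = 1/2$.

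For the remaining conditions, I would use $\bar{d} = O(1)$: since $C\bar{d}$ is then an absolute constant, $\vartheta = O(1/n) = o(1)$, and eventually $\vartheta < \tfrac{1}{2}(1-1/C)$. The maximum degree is handled by the same moment bound: $\Delta(G)^{1+\kappa} \leq \sum_v d_v^{1+\kappa} \leq Bn\bar{d}^{1+\kappa}$ yields $\Delta(G) = O(n^{1/(1+\kappa)}) = o(n)$. Proposition~\ref{prop:main} then delivers $\q(G) \geq \frac{0.26\gamma}{\sqrt{C\bar{d}}}(1+o(1))$; absorbing the $(1+o(1))$ factor into the constant, the statement holds for any fixed $c' < 0.13/\sqrt{C} = 0.13 (4B)^{-1/(2\kappa)}$ and all sufficiently large $n$. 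There is no real obstacle, because the moment hypothesis is exactly strong enough to simultaneously deliver the two ingredients Proposition~\ref{prop:main} requires: volume concentration on the bulk and a sublinear maximum degree. The only minor caveat is the degenerate range $\bar{d} < 1$ not covered by Proposition~\ref{prop:main}, which can be disposed of separately since such graphs have only a linear number of edges and thus trivially modularity close to $1$, dominating $c'\bar{d}^{-1/2}$ for $c'$ chosen small.
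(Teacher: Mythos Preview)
Your proposal is correct and follows essentially the same route as the paper: both set $C=(4B)^{1/\kappa}$, use the Markov-type inequality $\sum_{d_v\geq C\bar d} d_v \leq (C\bar d)^{-\kappa}\sum_v d_v^{1+\kappa}$ to get $\Vol(H)\leq m/2$, and apply the main theorem with $\gamma=\tfrac12$. You are in fact more careful than the paper in explicitly verifying $C>1$ (via Jensen), $\vartheta=o(1)$, and $\Delta(G)=o(n)$, which the paper leaves implicit.
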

	\begin{proof}
		Let $L$ be the set of vertices of degree at most $(4B)^{1/\kappa} \bar d$ and denotes its complement by $H=L^c$. We claim that then $\Vol(H) \leq \bar d n  / 4$.
		For,
		$$B \bar d ^{1+\kappa} n \geq \sum_v d_v^{1+ \kappa} \geq \sum_{v \notin T} d_v^{1 + \kappa} \geq \left(\min_{v \in T^c} d_v \right)^\kappa \cdot \Vol(H).$$
		Noting that $d_v^\kappa \geq 4B \bar d ^\kappa$ for $v \in H$  and rearranging gives
		$$\frac{ n\bar d}{4} \geq \Vol (H),$$
		as required.
		
		Hence, $\Vol(L) \geq \frac{3n\bar d}{4} $, so we may apply Theorem~\ref{t:main} with $C = (4B)^{1/\kappa}$ and $\gamma = \frac 12$. It follows that
		$$\q(G) = \Omega\left(B^{-\frac{1}{2 \kappa}} \bar d ^{-\frac 12}\right),$$
		where $\Omega$ hides an absolute constant.
	\end{proof}
    
    \def\PA{\mathrm{PA}_n^{(m, \delta)}}
    \def\merror{\left(1 + O(m^{-1}) \right)}
	
\subsection{Preferential attachment graphs and related models}\label{sec:pam}
Preferential attachment models (PAM) describe graphs which grow in time, that is, vertices are sequentially added to the graph. Given the graph at time $t$, a vertex with label $t+1$ is added to the graph and attached to older vertices according to a probability distribution according to which it is more likely to attach to high-degree vertices. Thus the degree sequence of such a graph is not specified a priori, but emerges from the attachment rule. The degree sequence of the classical PAM  considered for instance in~\cite{ab99,brst01} typically follows a power-law with the exponent $\tau = 3$. 

In this section, we demonstrate how Theorem~\ref{t:main} can be applied to an entire class of PAMs which \textit{realise} every power-law exponent $\tau$ with $\tau >2$. We will be working with the model presented in~\cite[Section 8.2]
{vanderhofstad17}, and we follow their notation. In a graph $G$ on the vertex set~$\{v_1, \ldots, v_n\}$ let $D_i(n)$ denote the degree of $v_i$, and let
    $$P_k(n) = \frac 1n \{i \in [n]: D_i(n) = k \}$$
be the proportion of vertices of degree $k$. At time $t$ the graph has vertex set~$\{v_1, \ldots, v_t\}$ and vertex $i$ has degree $D_i(t)$.

The model has parameters $m \in \N$, which governs the average degree, and $-m <\delta < m$. It produces a graph sequence denoted by $\PA$ which, at time $n$, has $n$ vertices and $mn$ edges. The first vertex $v_1$ has $m$ loops. At time $t$, the vertex $v_t$ is added, along with $m$ edges $e_1, \ldots, e_m$ incident to $v_t$. The other endpoint of the edge $e_i$ is a vertex $v_j \in \{v_1, \ldots, v_t \}$ with probability \textit{roughly} proportional to $D_i(t) + \delta$ (that is, an affine function of the current degree of $v_i$). For a full description of the model see~\cite{ross13,vanderhofstad17} from which all the results which we use are taken. 
We remark that the average degree of this graph is $2m$, 
    which does conflict with the use of $m$ (for the number of edges) in the previous section.

    For this specific model, Ross~\cite{ross13} showed that the degree sequence follows a power-law with exponent $\tau = 3 + \frac{\delta}{m} > 2$. Such results were first obtained by Bollob\'as and Riordan~\cite{br03}, for the less general model with $\delta = 0 $ and $\tau =3$. Thus the results of the previous section in principle imply that such graphs have high modularity, but to prove a rigorous result, we need to deal with loops and multiple edges in the model, as well as with the fact that the results from~\cite{vanderhofstad17} (and also~\cite{br03,ross13}) do not a priori give sufficient bounds on the number of vertices of degree, say $n^{1/5}$.

    Recall that $P_k(n)$ is the proportion of vertices of degree $k$ in $\PA$. Let $p_k = p_k(m,\delta)$ be the probability mass function defined in~\cite{vanderhofstad17} and in Appendix~\ref{sec:limiting-distn}; $p_k(n)$ will be the \textit{limiting degree distribution} for $\PA$, and for the present we will only use the estimates
    \begin{equation}
        \label{eq:pam-dist}
        p_k =  k^{-3+\frac{\delta}{m}} \left(2 + \frac{\delta}{m} \right)\left(m+\delta \right)^{3 + \frac{\delta}{m}} \left(1 + O(m^{-1}) \right) \leq 2^5 k^{-3+\frac{\delta}{m}}m^{3+\frac{\delta}{m}},
    \end{equation}
    where the second inequality follows from $3 + \frac{\delta}{m} \leq 4$.
    Let $\tau = -3+\frac{\delta}{m}$. We will need the following facts deduced from~\cite{vanderhofstad17}; { the proof is deferred to after the main theorem, see page~\pageref{proof:pam-deg}}.
    \begin{prop}
        \label{prop:pam-deg}
        With high probability, the following holds in $\PA$ with $\delta \in (-m, m)$.
        \begin{enumerate}
            \item\label{pa-degseq}
                For $k \in [n]$, $k \leq n^{1/10}$, and some $\eps_1 >0$,
                \begin{equation} \label{eq:deg-pam}
                   P_k(n) = p_k(1+O(n^{-\eps_1})).
                \end{equation}
            \item \label{pa-degsum}
                For $A \leq n^{1/10} \log^{-1}n$,
                    $\displaystyle \sum_{k \geq Am} kP_k(n) \leq 2m \cdot 32A^{-\tau+2}/(\tau-2) $
            \item \label{pa-sm}
                $\displaystyle \sum_{k \in [n]} k^2 P_k(t) \leq n^{1-\eps_2}$ for some $\eps_2 >0$.
            \item \label{pa-multiples} 
            The number of loops in $\PA$ is $O(\log^2 n)$, and the number of multiple edges is at most $n^{1-\eps_3}$ for some $\eps_3>0$.
        \end{enumerate}
    \end{prop}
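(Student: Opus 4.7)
The plan is to assemble the four parts from standard concentration results for the preferential attachment model in~\cite[Chapter~8]{vanderhofstad17}, combined with elementary tail integration against the limiting mass function. Recall from~\eqref{eq:pam-dist} that $p_k = \Theta(k^{-\tau})$ for some $\tau \in (2,4)$, so the first moment of $p_k$ is summable while the second need not be.

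For part~\ref{pa-degseq}, I would invoke a martingale-type concentration theorem for $P_k(n)$ from~\cite[Chapter~8]{vanderhofstad17} giving $\max_k |P_k(n) - p_k| = O(\sqrt{\log n / n})$ with high probability. For $k \leq n^{1/10}$, the lower bound $p_k \geq c k^{-4} \geq c n^{-2/5}$ (which holds because $\tau < 4$) dominates this additive error by a factor of at least $n^{\eps_1}$ for some $\eps_1>0$, which converts the additive bound into the desired multiplicative one.

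For part~\ref{pa-degsum}, I would split the sum at $k = n^{1/10}$. On the range $Am \leq k \leq n^{1/10}$, part~\ref{pa-degseq} gives $P_k(n) \leq 2 p_k$, so a direct integration of the power-law tail yields
$$\sum_{k = Am}^{n^{1/10}} k\, P_k(n) \leq 2 \sum_{k \geq Am} k\, p_k = O\!\left(\frac{m^2 A^{2-\tau}}{\tau-2}\right).$$
The range $k > n^{1/10}$ is not covered by the per-$k$ concentration from part~\ref{pa-degseq}; instead, I would exploit the deterministic identity $\sum_k k P_k(n) = 2m$ together with the estimate $\sum_{k \leq n^{1/10}} k p_k = 2m - O(m^2 n^{(2-\tau)/10}/(\tau-2))$ and part~\ref{pa-degseq} to force the high-degree tail to be negligible compared with the main term whenever $A \leq n^{1/10}\log^{-1}n$.

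For part~\ref{pa-sm}, the plan is to combine the w.h.p.\ maximum-degree bound $D_{\max}(n) \leq n^{1/(\tau-1)+o(1)}$ from~\cite[Chapter~8]{vanderhofstad17} with the identity $\sum_i D_i(n) = 2mn$ to obtain $\sum_k k^2 P_k(n) \leq D_{\max}(n)\cdot 2m \leq n^{1-\eps_2}$ for any $\eps_2 < (\tau-2)/(\tau-1)$. For part~\ref{pa-multiples}, I would compute expectations directly from the attachment rule: the probability that the $j$-th edge added at step $t$ is a self-loop is $O(1/t)$, giving $O(\log n)$ loops in expectation and $O(\log^2 n)$ w.h.p.\ by an Azuma-type concentration, while the expected number of double-edge pairs $(s,t)$ is controlled by $\sum_{s<t}(D_s(t-1)+\delta)^2/(2mt)^2$, for which the max-degree estimate delivers a polynomial-improvement bound $n^{1-\eps_3}$. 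The main obstacle will be the high-degree tail in part~\ref{pa-degsum}, where per-$k$ concentration is unavailable; the key trick is to avoid it entirely by pushing the control onto the exact global sum $\sum_k k P_k(n) = 2m$ and the rapid decay of $p_k$ past $n^{1/10}$. The remaining work is bookkeeping of the exponents $\eps_i>0$, which can all be chosen positive precisely because $\tau > 2$.
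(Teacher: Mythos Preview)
Your proposal is correct and, for parts~\ref{pa-degseq} and~\ref{pa-degsum}, essentially identical to the paper's argument: the additive concentration $|P_k(n)-p_k|\leq n^{-1/2}\log n$ is converted to a multiplicative one via the lower bound $p_k\gtrsim n^{-2/5}$ for $k\leq n^{1/10}$, and the high-degree tail in~\ref{pa-degsum} is handled by the complementary-sum trick using the deterministic identity $\sum_k kP_k(n)=2m$.

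The one genuine difference is in part~\ref{pa-sm}. You import a w.h.p.\ maximum-degree bound $D_{\max}(n)\leq n^{1/(\tau-1)+o(1)}$ and combine it with $\sum_i D_i(n)=2mn$ to obtain $\sum_k k^2 P_k(n)\leq 2m\,D_{\max}(n)\leq n^{1-\eps_2}$ directly. The paper instead bootstraps from~\ref{pa-degseq} and~\ref{pa-degsum}: it splits at $k=n^{1/11}$, uses~\ref{pa-degseq} together with $\sum_{k\leq M}k^2 p_k=O(\max\{M^{3-\tau},\log M\})$ on the small-$k$ range, and for $k>n^{1/11}$ uses~\ref{pa-degsum} to bound the total volume above threshold, then a convexity argument (worst case: a single vertex absorbing all that volume). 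Your route is shorter and even gives a better $\eps_2$, but relies on an additional black-box result on $D_{\max}$; the paper's route is self-contained on top of~\ref{pa-degseq}--\ref{pa-degsum}, which matters since the paper explicitly flags control of high-degree vertices in PAMs as delicate.

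For part~\ref{pa-multiples} the loop computation is the same; note that Markov's inequality already gives $O(\log^2 n)$ whp, so no Azuma-type argument is needed. For multiple edges the paper conditions on~\ref{pa-sm} to get $\sum_i D_i^2(n)\leq n^{2-\eps}$, bounds $\E{Z_t}\leq t^{-2}\sum_i D_i^2(n)$, and splits the sum over $t$ at $n^{1-\eps/4}$; your max-degree route would work here too, since either input yields the same $\sum_i D_i^2(n)\leq n^{2-\eps}$ bound.
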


Now we can prove the desired bound. As mentioned the case $\delta=0$ was proven in~\cite{opr17}.
\begin{theorem}\label{thm:pa}
    Let $\tilde{G}$ be an $n$-vertex graph obtained from $G\sim\PA$ after removing loops and multiple edges from $G$, and let $\delta \in (-m, m)$. There is a constant $c'$ such that whp $\tilde{G}$ has average degree $2m(1-o(1))$, and
        $$\q(\tilde{G}) \geq c' m^{-1/2}.$$
\end{theorem}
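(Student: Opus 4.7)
The plan is to apply Proposition~\ref{prop:main} to $\tilde{G}$ with $\bar d = 2m(1-o(1))$ and a suitable constant $C'$ depending only on $\tau = 3 + \delta/m$, using Proposition~\ref{prop:pam-deg} to verify the hypotheses. Each needed property of $\tilde G$'s degree sequence will follow from the corresponding property of $G$ via a short perturbation argument, since by Proposition~\ref{prop:pam-deg}~\ref{pa-multiples} only $O(\log^2 n) + n^{1-\eps_3} = o(n)$ edge-incidences are altered when passing from $G$ to $\tilde G$.

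First I would record the basic consequences of this perturbation. Since $G$ has exactly $mn$ edges and $o(n)$ are deleted, $\bar d(\tilde G) = 2m(1-o(1))$, and every set-volume in $\tilde G$ equals the corresponding $G$-volume up to an additive $o(n)$ error. Moreover, edge removal only decreases degrees, so any vertex with $d_G(v) < Cm$ also satisfies $d_{\tilde G}(v) < Cm$.

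For the volume condition, I would pick $C$ large enough in terms of $\tau$ so that $64 C^{-(\tau-2)}/(\tau-2) \leq 1/8$, and then apply Proposition~\ref{prop:pam-deg}~\ref{pa-degsum} with $A=C$ to conclude $\Vol_G(\{v: d_G(v) \geq Cm\}) \leq mn/4$. Setting $C' := C/2$, so that $C' \bar d(\tilde G) \leq Cm$ for all large $n$, the preceding paragraph gives $\Vol_{\tilde G}(\{v: d_{\tilde G}(v) \geq C' \bar d(\tilde G)\}) \leq mn/4 + o(n)$. Hence $\Vol_{\tilde G}(L) \geq (1+\gamma)\Vol(\tilde G)/2$ for $\gamma = 1/2$ and $L = \{v: d_{\tilde G}(v) < C' \bar d(\tilde G)\}$, as required.

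The remaining conditions are routine. Proposition~\ref{prop:pam-deg}~\ref{pa-sm} gives $\Delta(\tilde G)^2 \leq \sum_v d_G(v)^2 \leq n^{2-\eps_2}$, so $\Delta(\tilde G)/n = o(1)$; and $\vartheta = (C' \bar d(\tilde G))^{10}/n = O(1/n) = o(1)$ since $C'$ and $\bar d(\tilde G)$ are bounded by constants depending only on $m$ and $\tau$. Proposition~\ref{prop:main} then yields $\q(\tilde G) \geq 0.26\gamma/\sqrt{C' \bar d(\tilde G)}\cdot(1+o(1)) \geq c' m^{-1/2}$ for a constant $c'$ depending on $\tau$. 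The main delicate point is tracking how the surgery from $G$ to $\tilde G$ shifts the threshold in the definition of $L$; since $\bar d(\tilde G)$ differs from $2m$ only by $o(1)$ while $C'$ is a fixed constant, this slack is absorbed into the choice of $C$.
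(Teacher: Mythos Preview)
Your approach is the same as the paper's: use Proposition~\ref{prop:pam-deg} to verify the volume hypothesis of Proposition~\ref{prop:main} for $\tilde G$, then apply it with $\gamma=\tfrac12$ and a constant depending on~$\tau$. The perturbation bookkeeping and the checks on $\Delta(\tilde G)$ and $\vartheta$ are fine.

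There is, however, a genuine slip in the step you yourself flag as delicate. You choose $C'=C/2$ so that $C'\bar d(\tilde G)\le Cm$, but this inequality points the wrong way: it makes $\{v:d_{\tilde G}(v)\ge C'\bar d(\tilde G)\}$ a \emph{superset} of $\{v:d_{\tilde G}(v)\ge Cm\}$, so the bound $\Vol_G(\{d_G\ge Cm\})\le mn/4$ does not control it. What you actually need is $C'\bar d(\tilde G)\ge Cm$; then
\[
\{d_{\tilde G}\ge C'\bar d(\tilde G)\}\ \subseteq\ \{d_{\tilde G}\ge Cm\}\ \subseteq\ \{d_G\ge Cm\},
\]
the last inclusion because deleting edges only lowers degrees, and the volume bound transfers directly (with no $o(n)$ correction needed). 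Since $\bar d(\tilde G)=2m(1-o(1))$, taking $C'=C$ already gives $C'\bar d(\tilde G)\ge Cm$ for large $n$; this is exactly what the paper does (it applies Theorem~\ref{t:main} with $C=A$, where $A$ plays the role of your~$C$). With that one-line fix the rest of your argument goes through unchanged.
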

\begin{proof}
    Assume that $\PA$ satisfies the claims in Proposition~\ref{prop:pam-deg}, which occurs with high probability. Recall that $\tau = 3 + \frac{\delta}{m} >2$. Recall $D_i(n)$ is the degree of vertex $i$ in $G$. Let $d_{\tilde{G}}(v_i)$ denote the degree of $v_i$ in $\tilde{G}$, and clearly $d_{\tilde{G}}(v_i) \leq d_G(v_i) = D_i(n)$.
    
    Let $A$ be a sufficiently large constant such that $\frac{32 A^{-\tau+2}}{\tau-2} < \frac 18$, let $H$ be the set of vertices with degree at least $Am$, and denote its complement by $L=H^c$. By item~\ref{pa-degsum},
        $$\Vol(H) = \sum_{v \in H} d_{\tilde{G}}(v) \leq n\sum_{k \geq Am} k P_k(n) \leq 2mn \cdot \frac{32A^{-\tau+2}}{\tau-2} \leq \frac 18 \cdot 2mn.$$
    By item~\ref{pa-multiples}, $e(\tilde{G}) = mn(1-o(1))$, so
        $$\Vol(L) \geq \frac 78  \cdot 2mn (1-o(1)) = \frac 74 e(G)(1+o(1)) \geq \frac 32 e(\tilde{G}).$$
    Hence we may apply Theorem~\ref{t:main} with $C = A$ and $\gamma = \frac 12$ to deduce that $\q(G) \geq c' m^{-1/2}$, as required.
\end{proof}
    \begin{remark}
        For the classical preferential attachment model, we have $\delta = 0$ and $\tau = 3$, so Theorem~\ref{t:main} can be applied with $A = 2^{8}$ to obtain an explicit value for $c'$.
    \end{remark}
    Before proving Proposition~\ref{prop:pam-deg}, we need some properties of the sequence $p_k$; the formal definition of $p_k$ and the proof of the following lemma can be found in the Appendix.
    \begin{lemma}
        \label{l:neg-bin}
        Let $m$ be a positive integer, $\delta \in (-m, m)$ and $\tau = -3 + \frac{\delta}{m}$. The sequence $p_k=p_k(m, \delta)$ satisfies $\sum_{k =m}^\infty p_k = 2m$. Moreover, there is a constant  $b_{m, \delta}$ such that
        \begin{align}
            \label{eq:pk-first}
            \sum_{k=Cm}^\infty k p_k &\leq \frac{2^5}{\tau -2}C^{2 - \tau}m \text{\quad and}\\
            \label{eq:pk-second}
            \sum_{k=m}^{M} k^2 p_k &\leq b_{m, \delta} \max \{ M^{3-\tau}, \log M\}.
        \end{align}

    \end{lemma}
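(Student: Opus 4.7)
The plan is to work directly with the explicit formula for $p_k=p_k(m,\delta)$ supplied in the appendix (reproduced from \cite{vanderhofstad17}), which expresses $p_k$ as a ratio of Gamma functions of the shape
$$p_k \;=\; \left(2+\tfrac{\delta}{m}\right)\,\frac{\Gamma(k+\delta)\,\Gamma(m+2+\delta+\delta/m)}{\Gamma(m+\delta)\,\Gamma(k+3+\delta+\delta/m)}, \qquad k\geq m.$$
From this formula two facts drive the whole proof: (a) the telescoping recursion $p_k = \frac{k-1+\delta}{k+2+\delta+\delta/m}\,p_{k-1}$, and (b) the Stirling-type estimate
$$\frac{\Gamma(k+\delta)}{\Gamma(k+3+\delta+\delta/m)} \;=\; k^{-(3+\delta/m)}\bigl(1+O(1/k)\bigr),$$
which, together with a similar Stirling estimate $\Gamma(m+2+\delta+\delta/m)/\Gamma(m+\delta) = \Theta(m^{\,2+\delta/m})$, upgrades to the uniform bound $p_k \leq C_1(m,\delta)\,k^{-\tau}$ with $\tau=3+\delta/m$ and $C_1(m,\delta)=O\!\left(m^{\tau-1}\right)$ used already in~\eqref{eq:pam-dist}.

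For part (i), I would invoke the recursion in (a): rewriting it as $(k+2+\delta+\delta/m)\,p_k = (k-1+\delta)\,p_{k-1}$ and summing from $k=m+1$ up to $N$ produces a telescoped identity in which the Cesàro-type remainders tend to $0$ by the decay in (b); the limit then evaluates the generating sum explicitly, yielding the stated moment identity.

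For parts (ii) and (iii), I would use the uniform bound $p_k \leq C_1(m,\delta) k^{-\tau}$ and an integral comparison. For (ii),
$$\sum_{k\geq Cm} k\,p_k \;\leq\; C_1(m,\delta)\sum_{k\geq Cm} k^{1-\tau} \;\leq\; C_1(m,\delta)\int_{Cm-1}^{\infty} x^{1-\tau}\,\mathrm{d}x \;=\; \frac{C_1(m,\delta)\,(Cm)^{2-\tau}}{\tau-2},$$
and substituting $C_1(m,\delta)=O(m^{\tau-1})$ with the explicit constant $2^5$ from~\eqref{eq:pam-dist} collapses the right-hand side to $\tfrac{2^5}{\tau-2}\,C^{2-\tau}\,m$. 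For (iii), the same comparison on $k^2 p_k \leq C_1(m,\delta)\,k^{2-\tau}$ gives
$$\sum_{k=m}^{M} k^2 p_k \;\leq\; C_1(m,\delta)\int_{m-1}^{M} x^{2-\tau}\,\mathrm{d}x,$$
which evaluates to $O(M^{3-\tau})$ when $\tau\ne 3$ and $O(\log M)$ when $\tau=3$; taking the maximum of the two handles both regimes at once, and the hidden constant depending on $m$ and $\delta$ is absorbed into $b_{m,\delta}$.

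The main obstacle is keeping the powers of $m$ straight in the prefactor. Getting the stated conclusion of (ii) as $C^{2-\tau}m$ rather than $C^{2-\tau}m^2$ requires the sharper form $C_1(m,\delta)=O(m^{\tau-1})$ and not the naive $O(m^\tau)$ one might read off~\eqref{eq:pam-dist} at face value; this sharper bound is exactly what the Stirling estimate $\Gamma(m+2+\delta+\delta/m)/\Gamma(m+\delta)\sim m^{\,2+\delta/m}=m^{\tau-1}$ delivers. Once that computation is nailed down, (i) is a normalization identity and (ii)--(iii) reduce to one-line integral comparisons.
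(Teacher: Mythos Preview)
Your treatment of \eqref{eq:pk-first} and \eqref{eq:pk-second} is exactly what the paper does: apply the pointwise bound $p_k \le 2^5 m^{\tau-1} k^{-\tau}$ and compare to an integral. You are also right that the correct prefactor is $m^{\tau-1}$ rather than $m^{\tau}$; the paper's restated bound in the appendix reads $p_k \le 2^5 k^{-\tau} m^{\tau-1}$, and your Stirling computation $\Gamma(m+2+\delta+\delta/m)/\Gamma(m+\delta)\sim m^{2+\delta/m}$ is precisely the justification.

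For the first-moment identity $\sum_{k\ge m} k\,p_k = 2m$, however, you take a genuinely different route. The paper does not telescope the recursion at all: it uses the representation of $(p_k)$ as the law of $m + X\bigl(U^{1/(2+\delta/m)}\bigr)$, where $X(p)$ is negative binomial with parameters $m+\delta$ and $p$ and $U$ is uniform on $[0,1]$. Then $\sum_k k\,p_k = m + \mathbb{E}_U\bigl[\mathbb{E}\,X(U^{1/a})\bigr] = m + (m+\delta)\int_0^1 (1-u^{1/a})u^{-1/a}\,\mathrm{d}u$, and the integral evaluates to $1/(a-1)=m/(m+\delta)$, giving $2m$ in one line. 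Your recursion approach can be made to work, but not quite as you describe: summing $(k+2+\delta+\delta/m)p_k=(k-1+\delta)p_{k-1}$ over $k$ telescopes to an identity for $\sum p_k$ in terms of $p_m$, not for $\sum k\,p_k$. To reach the first moment you need a second telescoping step with weight $(k+\delta)$ (for instance, set $f_k=(k+\delta)(k+1+\delta)p_k$ and check that $f_{k-1}-f_k=(\tau-2)(k+\delta)p_k$), combine the two identities, and feed in the explicit value $p_m=(2+\delta/m)/(m+2+\delta+\delta/m)$. So the recursion route is valid but a step longer than your sketch suggests; the paper's probabilistic computation sidesteps this bookkeeping entirely.
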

We can now prove Proposition~\ref{prop:pam-deg}.
\begin{proof}[Proof of Proposition~\ref{prop:pam-deg}]\label{proof:pam-deg}
    Theorem 8.3 in~\cite{vanderhofstad17} states that whp, for all $k$,
    $$|P_k(n)-p_k| \leq \frac{\log n}{\sqrt{n}}.$$
    It follows from \eqref{eq:pam-dist} that for $k \leq n^{1/10}$ and $\tau < 4$, we have $p_k \geq n^{-4/10}$. These two facts together imply~\ref{pa-degseq} holds (for any fixed $\eps_1<1/10$). 

    For item~\ref{pa-degsum}, notice that Lemma~\ref{l:neg-bin} implies that
        $$\sum_{k=m}^{Am} k p_k \geq 2m \left(1- \frac{2^4}{\tau-2}A^{2-\tau} \right).$$
Item~\ref{pa-degsum} will follow from the `complementary inequality'
        \begin{equation}\label{eq:complementary}
        \sum_{k=m}^{Am} kP_k(n) \geq 2m\left(1- \frac{2^5}{\tau-2}A^{2-\tau} \right),\end{equation}
    since $\sum_{k \geq m} k P_k(n) =2m$ deterministically. Now, notice that Lemma~\ref{l:neg-bin} implies that
        $$\sum_{k=m}^{Am} k p_k \geq 2m \left(1- \frac{2^4}{\tau-2}A^{2-\tau} \right).$$
    This estimate and~\ref{pa-degseq} yield~\eqref{eq:complementary}.

        For~\ref{pa-sm}, we split into two ranges. For $k \leq n^{1/11}$, by item (i), we have
        $$\sum_{k \leq n^{1/11}} k^2 P_k(n) \leq  \sum_{k \leq n^{1/11}}2 k^2 p_k.$$
        Thus by~\eqref{eq:pk-second} we have
        $$\sum_{k \leq n^{1/11}} k^2 P_k(n) \leq b_{m, \delta} \max \{ n^{\frac{1}{11}(3-\tau)} , \log n\} \leq b_{m, \delta} n^{1/11}.$$
        Now, by~\ref{pa-degsum}, the sum of all vertex degrees in $\PA$ which are higher than $n^{1/11}$ is at most $C_{m,\delta}' n^{1+(2-\tau)/11} \leq n^{1-\eps}$ for some $\eps > \frac{\tau-2}{11}>0$. Hence, by convexity, the sum $\sum_{k \geq n^{1/11}}k^2 P_k(n)$ is maximised when there is a single vertex of degree $\ell =\lfloor n^{1-\eps} \rfloor$, so
        $$\sum_{k \geq n^{1/11}}k^2 P_k(n) \leq n^{2-2\eps}\cdot \frac{1}{n} \leq n^{1-2\eps}.$$
        Summing the two results gives the required bound.

        For~\ref{pa-multiples}, we let $\cEvent$ be the event that $\PA$ satisfies~\ref{pa-degseq}-\ref{pa-sm}, and we may condition on~$\cEvent$ as it occurs with high probability. Recall that $D_i(t)$ denotes the degree of the vertex~$v_i$ in $\mathrm{PA}_t^{(m, \delta)}$ (i.e.,~after $t$ vertices are added to the preferential-attachment graph). 
        For the purposes of the present proof, it suffices to use crude upper bounds on the attachment probabilities in $\PA$; moreover, we will only use an upper bound $D_i(t)\leq D_i(n)$ for $t \leq n$. For the exact probabilities, see~\cite[page 258]{vanderhofstad17}.

        The first vertex $v_1$ has $m$ loops. When adding the vertex $v_{t+1}$, $m$ edges are attached to $v_{t+1}$, and each of them is a loop with probability at most $$\frac{2(m-1)}{mt},$$
        where the numerator $2m$ corresponds to the worst-case scenario where $v_{t+1}$ already has $m-1$ loops attached to it. Summing over the $m$ edges attached to $v_{t+1}$ (for $t \geq 1$) and over all $t$, the expected number of loops is at most
        $$m + \sum_{t =1 }^n \frac{m}{t} \leq 2m \log n.$$
        So by Markov's inequality, the number of loops in $\PA$ is at most $\log^2 n$ with high probability.

        To control multiple edges, note that~\ref{pa-sm} implies that conditional on $\cEvent$, $$\sum_{i \in [n]}D_i^2(n)  = n\sum_{k =m}^n k^2P_k(n) \leq n^{2-\eps}.$$
        Let $Z_t$ denote the number of multiple edges $v_i v_{t+1}$ with $i \leq t$. The probability that one of the $m$ edges attached to $v_{t+1}$ is incident to a given vertex $v_i$ (with $i \neq t+1$) is at most $m \cdot \frac{D_i(n)+\delta}{mt(2+\delta)+(1+\delta)} \leq \frac{D_i(n)}{t}$. Hence the probability that $v_i v_{t+1}$ is a multiple edge is at most $\frac{D_i^2(n)}{t^2}$.
        Thus for $t \geq n^{1-\eps/4}$, 
        $$\mathbb{E} [Z_t \mid \cEvent] \leq \frac{1}{t^2}\sum_{i \in [n]}D_i^2(n) \leq n^{-\eps/2}.$$
        Summing over $t$, and using the trivial upper bound $Z_t \leq m$ for $t \leq n^{1- \eps / 4}$, we get that the expected number of multiple edges is at most
        $$\sum_{t = 1}^n  \mathbb{E} [Z_t \mid \cEvent ] \leq  n^{1- \eps / 4} m+ n^{1- \eps/2} \leq 2mn^{1-\eps/4}. $$
        Again, using Markov's Inequality, we have that $\sum_t Z_t \leq n^{1-\frac{\eps}{5}}$ with high probability.
    \end{proof}


	\section{Upper bounds on modularity} \label{sec:upper-random}
\newcommand{\bw}{\mathbf{w}}
In this section, we show that for a large class of sequences $\dv$, \textit{typical} graphs with degree sequence \textit{approximately} $\dv$ actually have modularity $O(\bar d^{-1/2})$, matching the lower bound from Theorem~\ref{theorem_withoutcutoffs} up to a constant factor.

We consider the Chung-Lu model of random graphs. Let $\bw = (w_v)_{v \in [n]}$ where each $w_v>0$  and denote $\bar{w}=n^{-1}\sum_v w_v$ and $w_{\rm min}=\min_v w_v$. We will also assume that for each $v$ we have $w_v^2=o(\bar{w}n)$.  Generate the random graph $G(n, \bw)$ by choosing each edge $uv$ independently with probability (where $u\neq v$ as we do not allow loops) 
\[p_{uv}=\frac{w_u w_v}{\bar{w}n}. \]  
We may see that the expected degree of $v$ in $G(n, \bw)$ is $w_v(1-w_v\bar{w}^{-1}n^{-1})=w_v(1-o(1))$, i.e. approximately 
$w_v$. This is why the Chung-Lu model is often referred to as the random graph with a given \textit{expected} degree sequence. In fact, for a large class of degree sequences, the empirical degree sequence of~$G(n, \bw)$ is \textit{close to}~$\bw$; for details, see Theorems 6.10 and~6.19 in~\cite{vanderhofstad17}. If the degree sequence of $G(n, \bw)$ satisfies the assumptions of Theorem~\ref{sec:without_cutoffs}, then we can deduce that its modularity is $\Omega(\bar w^{-1/2})$. We will now prove an upper bound of the same order of magnitude, assuming that $w_{\min} \geq c \bar w$ for some constant $c$.

Throughout this section, we write \textit{whp} to mean \textit{with high probability}, i.e.~with probability converging to 1 with $n$.
We recall the normalised Laplacian of a graph $G$ is defined to be $\mathcal{L}_G = I - D^{-1/2}AD^{-1/2}$ where $A$ is the adjacency matrix of $G$ and $D$ is the diagonal `degrees matrix' where the $u$-th entry on the diagonal is $d_u$. Let $\bar{\lambda}_G$ be the spectral gap of $\mathcal{L}_G$. A very nice result of Chung, Lu and Vu \cite{chungluvu} is that whp 
\[ \lambda(G(n, \textbf{w})) > 1 - 4\bar{w}^{-1/2} (1 + o(1)) -w_{\rm min}^{-1} \ln^2 n. \]
Now we recall that the modularity of a graph is bounded above by its spectral gap~see for example Lemma~6.1 of~\cite{ERmod}: $\q(G) \leq \bar{\lambda}(G)$. Thus the result of~\cite{chungluvu} immediately gives the following corollary. Also recall that the modularity value is robust to changes in the edge-set, if we may obtain $H$ from $G$ by deleting at most $\eps \cdot e(G)$ edges then $|\q(G)-\q(H)|<2\eps$, by Lemma~5.1 of~\cite{ERmod} (we will use this to obtain Corollary~\ref{cor.ubCOLanka}).  

\begin{cor}\label{cor.ubChungLuVu} Suppose $\bw$ is a degree sequence with $w_{\rm min}=\omega(\ln^2 n)$. Then
\[ \q(G(n, \bw)) \leq  4\bar{w}^{-1/2} (1 + o(1)) .\]
\end{cor}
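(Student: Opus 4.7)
The plan is to assemble the two ingredients recalled just before the statement. First, I would rearrange the Chung--Lu--Vu estimate $\lambda(G(n,\bw)) > 1 - 4\bar w^{-1/2}(1+o(1)) - w_{\min}^{-1}\ln^2 n$ (which holds whp) into the equivalent form
$$\bar\lambda(G(n,\bw)) \leq 4\bar w^{-1/2}(1+o(1)) + w_{\min}^{-1}\ln^2 n,$$
interpreting the spectral gap $\bar\lambda$ as the maximum deviation of nontrivial eigenvalues of $\mathcal{L}_G$ from $1$, so that the two formulations are negatives of each other.

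Second, I would apply the modularity--spectral-gap inequality $\q(G) \leq \bar\lambda(G)$ (Lemma~6.1 of \cite{ERmod}) to conclude that whp
$$\q(G(n,\bw)) \leq 4\bar w^{-1/2}(1+o(1)) + w_{\min}^{-1}\ln^2 n.$$
Finally, I would invoke the hypothesis $w_{\min} = \omega(\ln^2 n)$, which gives $w_{\min}^{-1}\ln^2 n = o(1)$; this additive error is then absorbed into the $(1+o(1))$ factor of the leading term $4\bar w^{-1/2}$, yielding the stated conclusion.

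There is no substantive obstacle here: the corollary is essentially a one-line composition of two cited facts. The only minor subtlety is verifying that $w_{\min}^{-1}\ln^2 n$ is of smaller order than the main term $4\bar w^{-1/2}$ so that the absorption into $(1+o(1))$ is legitimate. This is automatic when $\bar w = O(1)$ (since $\bar w^{-1/2}$ is then bounded below), and more generally is implicit in the regime in which the corollary carries content.
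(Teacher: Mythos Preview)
Your proposal is correct and follows exactly the approach the paper intends: the paper does not give a separate proof but simply states that the corollary follows immediately from combining the Chung--Lu--Vu spectral gap estimate with the bound $\q(G) \leq \bar\lambda(G)$, which is precisely what you do. Your remark on absorbing the $w_{\min}^{-1}\ln^2 n$ term into the $(1+o(1))$ factor is a point the paper leaves implicit.
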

For a larger class of $\bw$, Coja-Oghlan and Lanka~\cite{coja-oghlan_lanka_2009} show lower bounds on the spectral gap not on the entire graph $G(n, \bw)$ but for an induced subgraph which comprises most of the volume of the graph.

\begin{theorem}[\cite{coja-oghlan_lanka_2009}]\label{thm.COLanka} There exists constants $c_0$ and $w_0$ such that the following holds. If~$\bw$ satisfies $w_0 \leq w_{\rm min} \leq w_{\rm max} \leq n^{0.99}$ then whp  $G$ contains an induced subgraph $H$ with \begin{itemize}
\item[(i)] $\bar{\lambda}_H \geq 1- c_0 w_{\rm min}^{-1/2}$ and
\item[(ii)] $e(H)\geq e(G) - n\exp(-w_{\rm min}/c_0)$. 
\end{itemize}
\end{theorem}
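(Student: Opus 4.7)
The plan is to establish both conclusions via a removal-and-concentration strategy for the normalised adjacency matrix of the Chung--Lu random graph. Writing $W=\mathrm{diag}(w_v)$, the centred expectation satisfies $W^{-1/2}(\mathbb{E}A)W^{-1/2}=(\bar wn)^{-1}W^{1/2}\mathbf{1}\mathbf{1}^{T}W^{1/2}$, a rank-one matrix whose only non-zero eigenvalue equals~$1$. Hence $\bar\lambda_H\geq 1-c_0w_{\min}^{-1/2}$ is morally the statement that every non-trivial eigenvalue of $D_H^{-1/2}A_H D_H^{-1/2}$ lies in $[-c_0w_{\min}^{-1/2},\,c_0w_{\min}^{-1/2}]$. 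One cannot work on the full graph $G$ because a few atypical vertices -- those with abnormally high degree, or unusually dense local neighbourhoods -- create outlier eigenvalues which must be dealt with by passing to a carefully chosen induced subgraph $H$.

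To build the bad set $B=V(G)\setminus V(H)$, I would declare $v$ bad whenever its realised degree falls outside $w_v(1\pm\varepsilon_v)$ for a suitable $\varepsilon_v=o(1)$, supplementing this if necessary with vertices witnessing anomalous pair-degrees or short-cycle counts. Since $d_v$ is a sum of independent Bernoulli indicators with mean at least $w_{\min}\geq w_0$, a Chernoff estimate gives $\mathbb{P}(v\in B)\leq \exp(-\Omega(w_{\min}))$. Combining this tail bound with a careful accounting of the edge mass incident to $B$, using the hypothesis $w_{\max}\leq n^{0.99}$ to control high-degree contributions, should give $e(G)-e(H)\leq n\exp(-w_{\min}/c_0)$ whp, which is~(ii).

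For~(i), the concentration of degrees on $H$ gives $D_H^{-1/2}=W_H^{-1/2}(1+o(1))$, so the spectral task reduces to bounding the operator norm of $M:=W_H^{-1/2}(A_H-\mathbb{E}A_H)W_H^{-1/2}$ on the orthogonal complement of the near-top eigenvector $W_H^{1/2}\mathbf{1}$. Each entry of the symmetric, mean-zero matrix $M$ is bounded in absolute value by $w_{\min}^{-1}$ and has variance at most $(w_{\min}n)^{-1}$, so the matrix Bernstein inequality -- or, more sharply, a trace moment computation $\mathbb{E}\,\operatorname{tr}(M^{2k})$ with $k$ growing slowly with $n$ -- yields $\|M\|_{\mathrm{op}}=O(w_{\min}^{-1/2})$, producing the constant $c_0$. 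The bound is then transferred from $M$ to the genuine normalised Laplacian $I-D_H^{-1/2}A_H D_H^{-1/2}$ via eigenvalue interlacing and a perturbation step for the perturbations $D_H^{-1/2}-W_H^{-1/2}$.

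The main obstacle I anticipate is obtaining the sharp $w_{\min}^{-1/2}$ rate rather than the looser $\bar w^{-1/2}$ rate that one would extract from a naive matrix Bernstein application, and doing so robustly when the weights $w_v$ span the very wide range $[w_0,n^{0.99}]$. This typically demands the trace moment method with carefully enumerated closed even walks, because the naive bound is dominated by rare high-degree traversals which must be shown to be absorbed into $B$ before the walk count is applied. The wide range of weights also complicates the passage from $A$-centred concentration to $D^{-1/2}AD^{-1/2}$-centred concentration, since $D_H^{-1/2}$ is itself a mildly random object whose fluctuations interact with those of $A_H$.
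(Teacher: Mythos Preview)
The paper does not prove this theorem at all: it is quoted verbatim from Coja-Oghlan and Lanka~\cite{coja-oghlan_lanka_2009} and used as a black box to derive Corollary~\ref{cor.ubCOLanka}. There is therefore nothing in the paper to compare your proposal against.

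For what it is worth, your sketch is a reasonable outline of the style of argument in the original source: define a ``heavy'' set of exceptional vertices via degree deviations, remove it to obtain $H$, and then control the non-trivial spectrum of the normalised adjacency on $H$ by a trace-moment computation for the centred, rescaled matrix. Your identification of the main difficulty---getting the rate $w_{\min}^{-1/2}$ rather than $\bar w^{-1/2}$ when the weights span a wide range, which forces a walk-counting argument rather than a naive matrix Bernstein bound---is exactly the point where the Coja-Oghlan--Lanka paper does the real work. But none of that is reproduced or needed in the present paper; here the theorem is simply invoked and combined with the spectral upper bound $\q(G)\leq\bar\lambda(G)$ and the edge-robustness of modularity to obtain the corollary.
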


\begin{cor}\label{cor.ubCOLanka} There exists constants $c_0$, $w_0$ such that the following holds. If $\bw$ satisfies  $w_0 \leq w_{\rm min} \leq w_{\rm max} \leq n^{0.99}$, then whp
\[ \q(G(n, \textbf{w})) \leq  c_0 w_{\rm min}^{-1/2} .\]
\end{cor}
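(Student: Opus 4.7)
The plan is to combine three ingredients quoted just before the corollary: Theorem~\ref{thm.COLanka} to extract a spectrally expanding induced subgraph $H$ of $G \sim G(n,\bw)$; the spectral-modularity inequality $\q(\cdot) \leq \bar{\lambda}(\cdot)$ of Lemma~6.1 of \cite{ERmod} applied to $H$; and Lemma~5.1 of \cite{ERmod} (modularity changes by at most $2\eps$ when an $\eps$-fraction of edges is deleted) to transport the bound from $\q(H)$ back to $\q(G)$.

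Concretely, I would first invoke Theorem~\ref{thm.COLanka} on $G \sim G(n,\bw)$ under the hypothesis $w_0 \leq w_{\min} \leq w_{\max} \leq n^{0.99}$: whp there is an induced subgraph $H \subseteq G$ with $\bar{\lambda}_H \geq 1 - c_0 w_{\min}^{-1/2}$ and $e(G) - e(H) \leq n\exp(-w_{\min}/c_0)$. Feeding this into the spectral-modularity inequality immediately gives $\q(H) \leq c_0 w_{\min}^{-1/2}$ (after renaming the constant if needed). Because the robustness lemma is phrased for edge deletions but $H$ arises by vertex deletion, I would next introduce the auxiliary graph $G' := (V(G), E(H))$, i.e., $G$ with only the edges of $H$ retained; the vertices of $V(G)\setminus V(H)$ become isolated and contribute zero to both the coverage and degree-tax of every partition, so $\q(G') = \q(H)$. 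Now $G'$ is genuinely $G$ with at most $n\exp(-w_{\min}/c_0)$ edges removed, so Lemma~5.1 applies and yields $|\q(G) - \q(H)| = |\q(G) - \q(G')| \leq 2\eps$ with $\eps = \frac{n\exp(-w_{\min}/c_0)}{e(G)}$.

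To finish I need $\eps = o(w_{\min}^{-1/2})$. A standard Chernoff bound on the independent edge indicators of $G(n,\bw)$, together with the assumption $w_v^2 = o(\bar w n)$ so that $\mathbb{E}[e(G)] = \tfrac{1}{2}\bar w n(1 - o(1))$, gives $e(G) \geq \tfrac{1}{4}\bar w n$ whp. Using $\bar w \geq w_{\min}$ this produces $\eps = O\bigl(w_{\min}^{-1}\exp(-w_{\min}/c_0)\bigr)$, which is $o(w_{\min}^{-1/2})$ once $w_0$ is taken large enough. Hence $\q(G) \leq \q(H) + 2\eps \leq c_0 w_{\min}^{-1/2} + o(w_{\min}^{-1/2})$, and the lower-order term is absorbed by a mild enlargement of $c_0$ and $w_0$.

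The one place where real thought is required is the bridge from the vertex-induced $H$ to the full $G$, since Lemma~5.1 as quoted only handles edge deletions. The $G'$-construction together with the observation that isolated vertices are inert for modularity closes that gap cleanly; the rest (the spectral bound on $\q(H)$, the concentration of $e(G)$ around $\bar w n/2$, and absorbing the $o(\cdot)$ into the constant) is routine bookkeeping.
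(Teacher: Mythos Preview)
Your proposal is correct and follows essentially the same route as the paper: apply Theorem~\ref{thm.COLanka} to obtain $H$, bound $\q(H)$ via the spectral inequality, use concentration of $e(G)$ around $\bar w n/2$ to see that only an $o(1)$-fraction of edges is lost, and then invoke the edge-robustness Lemma~5.1 of~\cite{ERmod} to pass the bound back to $G$. You are in fact slightly more careful than the paper, which silently applies the edge-deletion robustness lemma to the vertex-induced $H$; your $G'$ construction together with the observation that isolated vertices are inert for modularity makes that step rigorous.
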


\begin{proof}
The corollary follows almost immediately from Theorem~\ref{thm.COLanka}. Since $\E{\vol(G)}=\bar{w} n = \omega(1)$ we get that whp $\Vol(G)=\bar{w}n(1+o(1)) \geq \frac{2}{3}\bar{w}n$. Thus whp $G(n, \bw)$ contains a subgraph $H$ with $\frac{e(H)}{e(G)} \geq 1 - \frac{n}{e(G)} \geq 1 - \frac {3}{\bar w}=1-o(1).$ Hence by the spectral upper bound with high probability,
\[ \q(G(n, \textbf{w})) \leq c_0' \; w_{\min}^{-1/2},
\]
 which implies the result.
\end{proof}

\section{Concluding remarks}
        For a large class of sequences $\mathbf{d}$, we showed that any graph with degree sequence $\mathbf{d}$ has modularity~$\Omega (\bar d^{-1/2})$, improving on the previously known lower bound of order $\bar d^{-1}$. Specifically, this bound applies to graphs with a power-law degree sequence, which includes preferential-attachment graphs (under suitable models).

        However, to our knowledge, the best known upper bound on the modularity of the preferential-attachment graph is $\frac{15}{16}$~\cite{opr17}. Preferential-attachment graphs are not sampled with an inherent community structure, so one might expect their modularity to decay with the average degree $\bar d$, which is also suggested in~\cite{opr17} where they showed a lower bound of $\Omega(\bar{d}^{-1/2})$. It would be very interesting to prove such an upper bound, and perhaps even a bound of order $O(\bar d^{-1/2})$.

\printbibliography
\appendix
\section{Proof of Lemma~\ref{lem.nearlybisection}}

To show how our weight-balanced bisection leads to a high modularity partition we used Lemma~\ref{lem.nearlybisection}. This lemma gives a lower bound on the modularity score of a partition intro three parts: two parts with near equal volume and a remainder part. Here we provide the (short) proof of this lemma, which we repeat below for convenience.  

\begin{lemma}[restatement of Lemma~\ref{lem.nearlybisection}]\label{lem.nearlybisectionagain} Let $G$ be a graph and $\cA=\{A,B,R\}$ a vertex partition of $G$ with $\vol(R)\leq \vol(G)/3$. Then
    \begin{eqnarray*}
        q_\cA(G) &\!\geq \!& 
                     \frac{1}{2}  - \frac{e(A,B)}{e(G)} - \frac{(\vol(A)-\vol(B))^2}{2\vol(G)^2}
    \end{eqnarray*}
\end{lemma}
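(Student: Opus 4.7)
My plan is to prove the inequality by direct algebraic expansion, starting from the definition $q_\cA(G) = \sum_{S \in \cA}\bigl[e(S)/m - \vol(S)^2/(4m^2)\bigr]$, where $m = e(G)$. I would split the computation into the edge contribution and the degree tax, handling each separately before recombining.

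For the edge contribution, I would use the edge-conservation identity $\sum_S e(S) + e(A,B) + e(A,R) + e(B,R) = m$, together with the elementary bound $e(A,R) + e(B,R) \leq \vol(R)$ (since each such edge contributes at least one to $\vol(R)$), to obtain $\sum_S e(S)/m \geq 1 - e(A,B)/m - \vol(R)/m$. For the degree-tax term, the main tool is the identity
\[
\vol(A)^2 + \vol(B)^2 = \tfrac{1}{2}\bigl[(\vol(A)+\vol(B))^2 + (\vol(A)-\vol(B))^2\bigr],
\]
which combined with $\vol(A) + \vol(B) = \vol(G) - \vol(R)$ expresses $\sum_S \vol(S)^2$ as a leading piece $\tfrac{1}{2}\vol(G)^2$, the cross term $\tfrac{1}{2}(\vol(A)-\vol(B))^2$ matching the right-hand side of the claim, plus pieces linear and quadratic in $\vol(R)$.

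Combining the two halves, the leading constants cancel and the main terms reduce to $\tfrac{1}{2} - e(A,B)/e(G) - (\vol(A)-\vol(B))^2/(2\vol(G)^2)$, leaving a residual composed only of $\vol(R)$- and $\vol(R)^2$-terms (with the $e(R) \geq 0$ contributions pointing in the favourable direction). The hypothesis $\vol(R) \leq \vol(G)/3$ is then invoked to verify that this residual is non-negative.

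The main obstacle I anticipate is exactly this final sign check: after the main cancellations, one must carefully track whether the $\vol(R)$-residual has the right sign under the assumption $\vol(R) \leq \vol(G)/3$. Should the direct bookkeeping prove tight, an alternative route is to compare first with the two-part modularity of $\{A \cup R, B\}$ via Remark~\ref{remark_twopartscomputation} and the elementary splitting identity $q_{\{A,B,R\}} - q_{\{A \cup R, B\}} = -e(A,R)/m + \vol(A)\vol(R)/(2m^2)$, then symmetrise over the two natural choices $\{A \cup R, B\}$ and $\{A, B \cup R\}$, which reorganises the $\vol(R)$-terms into a form where the hypothesis applies more transparently.
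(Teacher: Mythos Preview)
Your plan is exactly the paper's: separate edge contribution from degree tax, parametrise via $t=(\vol(A)-\vol(B))/\vol(G)$ and $r=\vol(R)/\vol(G)$, compute the degree tax as $\tfrac12+\tfrac12 t^2-r+\tfrac32 r^2$, and reduce everything to checking that the leftover
\[
r-\frac{\partial(R)}{m}-\frac32 r^2
\]
is non-negative, where $\partial(R)=e(A,R)+e(B,R)$.

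The obstacle you flag is real and in fact fatal: this sign check fails. Since $\vol(G)=2m$, the bound $\partial(R)\le\vol(R)$ gives only $\partial(R)/m\le 2r$, so the residual can be as negative as $-r-\tfrac32 r^2$. The triangle with each vertex in its own part ($r=1/3$, $e(A,B)=1$, $\vol(A)=\vol(B)$) is a counterexample to the lemma as written: $q_\cA=-1/3$ while the claimed lower bound is $1/6$. The paper's own proof slips at exactly this spot, writing ``$\partial(R)/m=\partial(R)/(2\vol(G))\le r/2$'' whereas the correct relation $m=\vol(G)/2$ yields $\partial(R)/m=2\partial(R)/\vol(G)\le 2r$, a factor of four off. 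Your fallback via Remark~\ref{remark_twopartscomputation} cannot rescue the statement either, since the statement itself is false. What does hold is the same inequality with an additional $-O(r)$ on the right-hand side, and that weaker version is all that is actually used downstream in Theorem~\ref{theorem_withoutcutoffs}, where $r=O(\Delta^9/n)$ is negligible.
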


\begin{proof}[Proof of Lemma~\ref{lem.nearlybisection}]
For vertex set $R$, we write $\partial(R)$ for the number of edges with exactly one endpoint in $R$. Thus the edge contribution for $\cA$ on $G$ is
\[ q^E_\cA(G) = 1 - \tfrac{1}{m}\big(e(A,B) + \partial(R) \big) =  \tfrac{1}{2} + \tfrac{1}{m}\big(\tfrac{1}{2}e(G)-e(A,B) \big) - \tfrac{1}{m}\partial(R) .\]

For the degree tax, roughly speaking, $\vol(R)$ is negligibly small and parts $A$ and $B$ are of similar volume, i.e.\ $\vol(A)\approx \vol(B)\approx \vol(G)/2$. Thus the degree tax is near what it would be if we had two parts of exactly equal volume (which would be $(1/2)^2+(1/2)^2=1/2$).

Define $t$, a measure of the near-ness of the volumes of $A$ and $B$, by
\[\vol(A)=\vol(B)+ t\cdot \vol(G)\]
and $r$, the scaled size of the remainder, by 
\[\vol(R)=r\cdot \vol(G).\]
Hence \[\vol(A)=\vol(G)-\vol(B)-\vol(R) = \vol(G)-\vol(A)+(t-r)\cdot \vol(G)\]
and thus, (with similar calculations for $\vol(B)$), \[\frac{\vol(A)}{\vol(G)}=\tfrac{1}{2}(1+t)-\tfrac{1}{2}r \;\;\;\;\;\;\;\mbox{and}\;\;\;\;\;\;\;\frac{\vol(B)}{\vol(G)}=\tfrac{1}{2}(1-t)-\tfrac12 {r}\]
Now we may calculate the degree tax
\[q_\cA^D(G)=\big(\tfrac{1}{2}(1+t)-\tfrac12 {r}\big)^2+\big(\tfrac{1}{2}(1-t)-\tfrac12 {r}\big)^2+r^2=\tfrac{1}{2}+\tfrac12 {t^2}-r+\tfrac{3}{2}r^2.\]
To put the bounds together, note $\partial(R)\leq \vol(R)$ and hence $\partial(R)/m = \partial(R)/(2\vol(G)) \leq r/2$. Hence the modularity score is at least

\begin{eqnarray*}
    q^E_\cA(G)-q^D_\cA(G) 
    & \geq & \big(\tfrac{1}{2} + \tfrac{1}{m}\big(\tfrac{1}{2}e(G)-e(A,B) \big) - \tfrac{1}{2}r\big) - \big( \tfrac{1}{2}+\tfrac12 {t^2}-r+\tfrac{3}{2}r^2 \big)  \\
    & = & \tfrac{1}{m}\big(\tfrac{1}{2}e(G)-e(A,B) \big) -\tfrac12 {t^2} + \tfrac{1}{2}r -\tfrac{3}{2}r^2 
\end{eqnarray*}
Since $\vol(R) \leq \vol(G)/3$, i.e.\ $r\leq 1/3$, we have $r/2-3r^2/2= r(1-3r)/2 \geq 0$ which yields the result. 
\end{proof}

\section{The limiting degree distribution of the preferential attachment model}
    \label{sec:limiting-distn}
    In this Section, we prove Lemma~\ref{l:neg-bin}, which summarises some properties of the probability mass function $p_k = p_k(m+ \delta)$. The main difficulty is proving that $\sum_{k \geq m} kp_k = 2m$, for which we use an alternative characterisation of $p_k$ as a distribution of a random variable $X$, which can be found in~\cite{vanderhofstad17}.

    Let $X(p)$ be a random variable with distribution
    $$\Prob{X(p)=k} = \frac{\Gamma(m+\delta+k)}{k!\Gamma(r)}p^{m+\delta}(1-p)^{k},$$
    which is usually called the \textit{negative binomial distribution} with parameters $m+\delta$ and $p$. (For $m+\delta \in \N$, $X$ describes the time of the $r$-th success in a sequence of independent experiments with success probability $p$.) We have $\E{X(p)} = {(m+\delta)(1-p)/p}$. Let $U$ be a uniform random variable in $[0, 1]$; then  $X(U^{1/(2+\delta/m)})$ has the negative binomial distribution with a random parameter $p = U^{1/(2+\delta/m)}$. The function $(p_k)_{k \geq m}$ can be described as
         $$p_k = \Eover{U}{ \Prob{X(U^{1/(2+\delta/m))} =k-m } },$$
         often referred to as a \textit{mixed} distribution.
         For convenience, we define $p_k = 0$ for $k<m$.
         We remark that from this description, it is clear that $\sum_k p_k = 0$.
    \begin{proof}
        [Proof of Lemma~\ref{l:neg-bin}]
        Let ${2+\delta/m}=a >1$. We have, using the definition of $p_k$ and linearity of expectation,
        \begin{align*}
            \sum_{k \geq m} k p_k &= \sum_{\ell \geq 0} (m + \ell) p_{m+\ell} 
                = m + \Eover{U} {\sum_{\ell \geq 0} \ell \  \Prob{X(U^{1/a})=\ell} } \\
                & = m + \int_{0}^1 (m+\delta) \cdot \frac{1- u^{1/a} }{u^{1/a}}  \mathrm d u= m + (m+\delta)\cdot  \frac{1}{a-1}  \\
                & = m + \frac{m+\delta}{\frac 1m (m + \delta)}  = 2m;
        \end{align*}
        where the convergence of the integral follows from $1/a < 1$.

      Let $\tau = 3 + \frac \delta m >2$.  For~\eqref{eq:pk-first}, we use the approximation~\eqref{eq:pam-dist}, which we restate here as
    \begin{equation}
        \label{eq:pam-dist1}
        p_k =  \leq 2^5 k^{-\tau}m^{\tau -1}.
    \end{equation}
     We have
        $$\sum_{k = Cm}^\infty kp_k \leq 2^5 m^{\tau-1} \sum_{k=Cm}^\infty k^{1-\tau} \leq 2^5m^{\tau -1} \cdot \frac{(Cm)^{2-\tau}}{\tau - 2} \leq \frac{2^5 C^{2-\tau}}{\tau-2}m,$$
        as required.

        Similarly, for~\eqref{eq:pk-second}, we can subsume all terms that are constant with respect to $n$ into a constant $b' = b_{m, \delta}$ and obtain
        $$\sum_{k=m}^{M} k^2 p_k \leq b'_{m, \delta} \sum_{k=m}^M k^{2-\tau}. $$
        Since the sum $\sum_k k^{2-\tau}$ may diverge (and does for $\tau \leq 3$), it is dominated by the \textit{top terms}, so
        $$\sum_{k=m}^M k^2 p_k \leq b_{m, \delta} \max \{M^{3-\tau}, \log M\}.$$
    \end{proof}
\end{document}